\newtheorem{theorem}{Theorem}
\newtheorem{lemma}[theorem]{Lemma}
\newtheorem{corollary}[theorem]{Corollary}
\newtheorem{proposition}[theorem]{Proposition}
\theoremstyle{definition}
\newtheorem{definition}[theorem]{Definition}
\theoremstyle{remark}
\begin{document}

\title{\vspace{-2cm} {Nullspace Vertex Partition in Graphs} 
\author{Irene Sciriha
\thanks{{irene.sciriha-aquilina@um.edu.mt}
{:--Corresponding author}}
\and{Xandru Mifsud\thanks{xandru.mifsud.16@um.edu.mt}}\and
{James Borg\thanks{james.borg@um.edu.mt}}
 \thanks{\textsc{Department of Mathematics, 
 Faculty of Science, University of Malta, Msida, Malta}}}}

\maketitle
{\bf Keywords} {Nullspace, core vertices, core--labelling, graph perturbations.}

{\bf Mathematics Classification 2021}{05C50, 15A18}


\begin{abstract}
	 The core vertex set of a graph is an invariant of the graph. It consists of those vertices associated with	  the non-zero entries of the nullspace vectors of a  $\{0,1\}$-adjacency matrix.  The remaining vertices of the graph form  the core--forbidden vertex set. For  graphs with independent core vertices, such as bipartite minimal configurations and trees, the nullspace induces a well defined three part vertex partition. The parts of this partition are  the core vertex set, their neighbours and  the remote core--forbidden vertices. The set of the remote core--forbidden vertices are those not adjacent to any core vertex. We show that this set can be removed, leaving the nullity unchanged. For graphs with independent core vertices,  we show that  the submatrix of the adjacency matrix defining the edges incident to the core vertices determines the nullity of the  adjacency matrix.  For the efficient allocation of   edges  in a  network graph without altering the  nullity of its adjacency matrix, we determine  which perturbations make up  sufficient conditions for the  core vertex set of the adjacency matrix of a graph to be preserved in the process.

\end{abstract}

\section{Introduction}
\label{SecIntro}
A graph $G = (V,E)$ has  a finite vertex set $V=\{v_1,v_2,\ldots,v_n\}$ with vertex labelling $[n]:=\{1,2,...,n\}$ and an edge set $E$ of 2-element subsets of $V$. 
 The graphs we consider are simple, that is without loops or multiple edges. A subset $U$ of $V$ is independent if no two vertices form an edge. 
 The open--neighbourhood of a vertex $v \in V$, denoted by $N(v)$, is the set of all vertices incident to $v$.
   The degree $\rho (v)$ of a vertex $v$ is the number of edges incident to $v$. 
 The induced subgraph $G[V\backslash S]$ of $G$   is $G-S$ obtained 
 by deleting a vertex subset $S,$  together with  the edges incident to the vertices in $S$. For simplicity of notation,  we write $G-u$ for the induced subgraph obtained from $G$ by deleting vertex $u$ and $G-u-w$ when  both vertices $u$ and $w$ are deleted.

The adjacency matrix $\mathbf{A} = (a_{ij})$ of the labelled graph $G$ on $n$ vertices is the $n\times n$ matrix $\mathbf{A} = (a_{ij})$  such that $a_{ij} = 1$ if the vertices $v_i$ and $v_j$ are adjacent (that is $v_i\sim v_j$)  and $a_{ij} = 0$ otherwise.  The nullity $\eta(G)$ is the algebraic 
multiplicity of the eigenvalue $0$ of $\mathbf{A}$, obtained as a root of the characteristic polynomial $\det(\lambda {\bf I} -{\bf A})$.  The geometric multiplicity of an eigenvalue of a matrix is the dimension   $\eta({\bf A})$ of the nullspace $\ker({\bf A})$  of ${\bf A}.$     
 Since ${\bf A}$  is real and symmetric,  it is the same as  its  algebraic 
multiplicity. In particular, the {\it nullity }  $\eta(G)$  of $G$ is the multiplicity of the eigenvalue 0.
By the dimension theorem for  linear transformations, for   a graph  $G$ on $n$ vertices, the  rank of  ${\bf A}$ is $\text{rank}(G) = n - \eta(G)$.
 Graphs, for which 0 is an eigenvalue, that is  $\eta(G)> 0,$  are singular.

   In \cite{ SciCoefx97, SciConstrNullOne98,SciRanKalamazoo1999}, the terms core vertex, core--forbidden vertex and kernel vector for a singular graph $G$  are introduced. The \textit{kernel vector} refers to a non--zero vector $\mathbf{x}$ in the nullspace of $\mathbf{A},$ that is, it satisfies $\mathbf{A}\mathbf{x} = \mathbf{0}$, $\mathbf{x} \neq \mathbf{0}$. The support of a vector  $\mathbf{x}$  is the set of indices of non--zero entries of ${\bf x}$. 

\begin{definition}
	\label{core}
	\cite{SciConstrNullOne98,SciCHznSingGr07} A vertex of a singular graph $G$ is a {\em core vertex} $(cv)$ of  $G$ if it  corresponds to a non-zero entry of {\it some} kernel vector of $G$. A vertex $u$ is a {\em core--forbidden vertex} $(cfv)$, if {\it every} kernel vector has a zero entry at position $u.$
\end{definition}

It follows that the union of the elements of the support of  
all kernel vectors of  ${\bf A}$ form the set of core vertices of  $G.$ It is 
clear that
 a vertex of a singular graph $G$ is either a $cv$ or a $cfv$. The set of core vertices is denoted by $CV$, and the set  ${\mathcal V}\backslash CV$ by $CFV$. 
 
 Cauchy's Inequalities for real symmetric matrices, also referred to as the   Interlacing Theorem in spectral graph theory \cite{SelectedSchwenkEigs}, are considered to be among the   most powerful tool in studies related to the location of eigenvalues. The   Interlacing Theorem     refers to the interlacing of the eigenvalues of the adjacency matrix of a vertex  deleted subgraph relative to those of the parent graph.

As a consequence of  the well-known  Interlacing Theorem, the nullity of a graph can change by 1 at most,  on deleting a vertex.

On deleting a vertex, the nullity  reduces  by 1 if and only if the vertex is a core vertex \cite[Proposition~1.4]{SciMaxExtremSing2012}, \cite[Corollary 13]{SciCoalEmbNut08} and \cite[Theorem 2.3]{SciFarrBk2020}. It follows that the deletion of a core--forbidden vertex can leave the nullity of the adjacency matrix unchanged, or else  the nullity increases by 1.

\begin{definition}  A vertex of a graph $G$ is  $cfv_{mid}$ if its deletion leaves the nullity of the adjacency matrix, of the subgraph obtained, unchanged. A vertex of $G$ is 
	 $cfv_{upp}$ if when  removed,  the nullity increases by 1.
The set  $CFV$ is the disjoint union of the sets $\left\{cfv_{mid}\right\}$ and $\left\{cfv_{upp}\right\}$, denoted by $CFV_{mid}$  and $CFV_{upp},$ respectively.
\end{definition}

At this point it is worth mentioning that in 1994, the first author coined the phrases {\it core vertices,  periphery and  core-forbidden vertices}. 
The core vertices  with respect to  ${\bf x}$ of a graph $G$ with a singular adjacency matrix ${\bf A}$ correspond to the support 
of the vector ${\bf x}$ in the nullspace of  ${\bf A}$. 
One must not confuse the  core vertex set with the same term referring to independent sets introduced much  later  \cite{Levit2002Core}. 
The term core is also used in relation to graph homomorphisms. 

There were other researchers who used associated concepts in different contexts. 
In 1982, Neumaier  used the terms  {\it essential} and {\it non--essential} vertices 
corresponding to core vertices and core--forbidden vertices, respectively, but  only for the class of trees \cite{NeumaierEssential82}.
Back in 1960, S. Parter studied the upper core--forbidden vertices in the context of real symmetric matrices. 
In fact in the linear algebraic community these vertices are referred to as Parter vertices, the core vertices 
 as downer vertices and the middle core--forbidden vertices as neutral vertices. Core--forbidden vertices 
are also referred to as Fiedler vertices in engineering.

Graphs with no edges between pairs of vertices in CV have a well defined vertex partition, which facilitates the form of the adjacency matrix in block form as  shown in (\ref{EqCoreL}) in Section \ref{SecInd}.

 \begin{figure}
 	\begin{center}
 		\includegraphics[width=7cm]{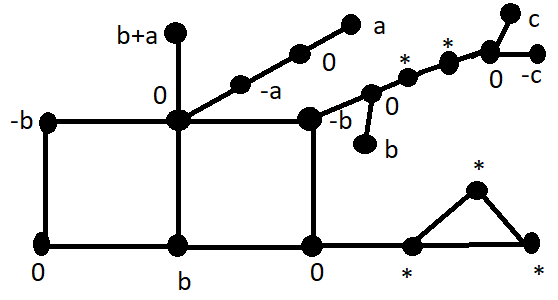}
 		\caption{A vertex partition induced by a generalized kernel vector of $G$ in a graph of nullity 3: the label 0 indicates a vertex in $N(CV)$; the starred vertices are $cfv_R$.}   \label{FigNull2e}
 	\end{center}
 \end{figure}
\begin{definition}
A graph is said to have \textit{independent core vertices} if no two core vertices are adjacent.
\end{definition}

If  CV is  an independent set, then 
 the  core-forbidden vertex set $CFV$ is partitioned into two subsets:
 $N(CV),$ the neighbours of the core vertices in $G$, and  $CFV_{R},$  the remote core--forbidden  vertices, as shown in Figure \ref{FigNull2e} for a graph of nullity 3.
  A similar concept is considered in \cite{SANDER2009133,JAUME2018836} for the case of trees. In this work, unless specifically stated,  we consider all graphs.
 
 \begin{definition}
 A \textit{core-labelled} graph $G$ has   an independent CV. The  vertex set of $G$ is partitioned such that $V = CV \ \dot{\cup} \ N(CV) \ \dot{\cup} \ CFV_R$. The vertices of $CV$ are labelled first, followed by those of $N(CV)$ and then by those of $CFV_R.$\end{definition}

In Section \ref{SecPend}, we  show that removing a pendant edge from a graph not only preserves the nullity (which is well known)  but also the type of vertices. 

 In Section \ref{SecInd}, 
we determine the nullity of the submatrices of the adjacency matrix for a graph in  the class of graphs with independent core vertices.  The remote core--forbidden vertices do not contribute to the equations involving the  nullspace vectors and can be removed to obtain a {\it slim graph}. In Section \ref{SecBip}, bipartite minimal configurations are shown to be slim graphs with independent core vertices. Moreover all vertices in $N(CV) $ of a bipartite minimal configuration are shown to be  upper core--forbidden vertices. 
In Section \ref{SecTrees}, we obtain results on the nullity and the number of the different types of vertices of singular trees in the light of the results obtained in  Section \ref{SecInd}.  Section \ref{SecAddE} focuses on  the types of non--adjacent vertex pairs that can be joined by edges in  a graph under various constraints associated with the nullspace of ${\bf A}.$

\section{Graphs with Pendant Edges} \label{SecPend}
By definition of $CV$  and $CFV,$ the nullspace of $\bf A$ induces a partition of the vertices of the associated graph $G$ into $CV$ and $CFV.$  The set $CV$ is empty if $G$ is non--singular  and non--empty otherwise. It could happen that  $CFV$ is empty  in which case the  graph is  singular and it is a {\it core graph}. Consider two graphs on 4 vertices. The path $P_4$ is non--singular whereas the cycle $C_4$ is a core graph of nullity two.

A quick method  to obtain the nullity and kernel vectors of a graph is known as the {\it Zero Sum Rule}.  The neighbours of a vertex  are weighted so that their weights add up to zero. Repeating this for each vertex gives the minimum number $\eta (G)$ of independent parameters in which to express the entries of a generalized vector in the nullspace of ${\bf A}$. Figure \ref{FigNull2b} shows a graph of nullity two with the entry of a generalized kernel vector next to each vertex, in terms of the parameters $a$ and $b$.  
   
\begin{figure}[!h]
\begin{center}
\includegraphics[width=7cm]{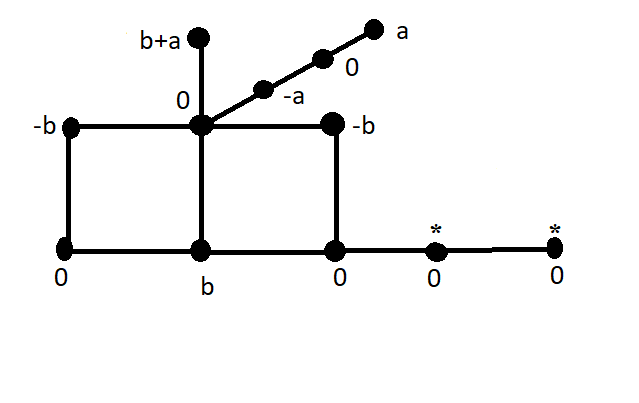}
\caption{A graph of nullity 2 and  a generalized kernel vector ${\bf x}(a,b)$ of $G$. The labels in terms of $a$ and $b$ identify the core vertices;  the label 0 indicates  core--forbidden  vertices.}  \label{FigNull2b}
\end{center}

\end{figure}

We are interested in the change in the type of vertices on the deletion  of   vertices and edges. 
Deleting a core--vertex from an odd path $P_{2k+1}$ may transform some of  the core vertices to $CFV_{upp}$. Similarly, deleting a $CFV_{upp}$ vertex from the cycle $C_6$  on six vertices transforms some of the core--forbidden vertices to core--vertices. 
Removing a core vertex and a neighbouring $cfv$ may alter the nullity. Consider the 4 vertex graph obtained by  identifying an edge of two 3--cycles. Removing the identified edge increases the nullity by 1, whereas removing any of the other edges decreases the nullity by 1.

 However, it is well known that removing an end vertex $v,$  also known as a leaf, in the literature, and its unique neighbour $u,$ from a graph $G$, leaves the nullity unchanged in $G - u - v$ \cite{CvetGutMultZeroeig72}. Note that  the vertex $v$ may be  $cv$  or $cfv.$ We give a new proof of this known result that also leads to an unusual preservation of the  type of the remaining vertices after removing two vertices.

\begin{theorem} \label{TheoRemPend}
Let $w$ be an end vertex and $u$ its unique neighbour in a singular graph $G$. The nullity of $G - u - w$ is the same as that of $G$. Moreover, the type of vertices in $G - u - w$ is preserved. 
\end{theorem}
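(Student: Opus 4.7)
The plan is to exploit the block structure of the adjacency matrix when $w$ and $u$ are placed as the last two labelled vertices. With $\mathbf{b}$ denoting the indicator of the neighbours of $u$ in $G-u-w$, I expect the bottom two rows of $\mathbf{A}(G)$ to read $(\mathbf{b}^{T},\,0,\,1)$ and $(\mathbf{0}^{T},\,1,\,0)$. Partitioning a kernel vector as $\mathbf{x}=(\mathbf{y}^{T},x_{u},x_{w})^{T}$, the system $\mathbf{A}(G)\mathbf{x}=\mathbf{0}$ should immediately give $x_{u}=0$ (from the row indexed by $w$), $\mathbf{A}(G-u-w)\mathbf{y}=\mathbf{0}$ (from the top block), and $x_{w}=-\mathbf{b}^{T}\mathbf{y}$ (from the row indexed by $u$). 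Restriction to $V(G)\setminus\{u,w\}$ would then furnish a linear bijection between $\ker\mathbf{A}(G)$ and $\ker\mathbf{A}(G-u-w)$, yielding $\eta(G)=\eta(G-u-w)$.

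For the second part I would first dispatch the $cv$ versus $cfv$ dichotomy. Because the bijection above preserves the entries $x_{v}$ for every $v\in V(G)\setminus\{u,w\}$, the support of any kernel vector of $\mathbf{A}(G)$ restricted to these vertices equals the support of the corresponding kernel vector of $\mathbf{A}(G-u-w)$. Taking the union over all kernel vectors gives $CV(G)\cap(V(G)\setminus\{u,w\})=CV(G-u-w)$, and consequently the $cfv$'s outside $\{u,w\}$ coincide as well.

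The remaining task is to distinguish $CFV_{mid}$ from $CFV_{upp}$ among these shared $cfv$'s. Here the plan is to apply the first part inductively to $G-v$ for any $v\in V(G)\setminus\{u,w\}$: since $v$ is neither $u$ nor $w$, the vertex $w$ is still an end vertex of $G-v$ with unique neighbour $u$, so the same block argument gives $\eta(G-v)=\eta((G-v)-u-w)=\eta((G-u-w)-v)$. Subtracting $\eta(G)=\eta(G-u-w)$, the nullity jump on deleting $v$ is the same in $G$ as in $G-u-w$, so $v$ is of the same type in both graphs.

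The principal technical care needed is ensuring that the block argument remains valid under the further deletion of $v$; the hypothesis $v\notin\{u,w\}$ is precisely what preserves $w$ as a pendant of $G-v$. No singularity assumption on $G-v$ is required for the linear-algebraic identity to hold, so the induction on subgraphs is automatic, and the only real obstacle is the bookkeeping of labelling the two distinguished vertices last so that the block decomposition reads off cleanly.
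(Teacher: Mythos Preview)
Your argument is correct and, for the nullity equality and the $cv$/$cfv$ dichotomy, it is exactly the paper's proof: place $u,w$ last, read off the block equations to get $x_u=0$, $\mathbf{A}(G-u-w)\mathbf{y}=\mathbf{0}$, and $x_w=-\mathbf{b}^{\intercal}\mathbf{y}$, obtaining a linear bijection $\ker\mathbf{A}(G)\to\ker\mathbf{A}(G-u-w)$ that preserves the entries at every vertex of $V\setminus\{u,w\}$.

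The one genuine difference is scope. The paper interprets ``type'' as the $cv$ versus $cfv$ distinction and stops once the bijection preserves supports; your third paragraph goes further and also pins down $cfv_{mid}$ versus $cfv_{upp}$ by reapplying the nullity identity to $G-v$ (valid because $w$ remains pendant in $G-v$ when $v\notin\{u,w\}$, and, as you note, no singularity hypothesis is needed for the block computation). This extra step is sound and buys a strictly stronger conclusion than the paper's own proof delivers, at essentially no additional cost.
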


\begin{proof}
Let $u, w$ be the $n-1^{\text{th}}$ and  $n^{\text{th}}$ labelled vertices, respectively, of a graph on $n$ vertices.  The adjacency matrix $\mathbf{A}(G)$ satisfies  $$
\mathbf{A}(G)\left(\begin{array}{c}
\mathbf{x}\\
\hline
y\\
z \end{array}\right)
=
\left (\begin{array}
{ccc|cc}
&\mathbf{A}(G-u-w)&&{\star}&{\bf 0}\\
  \hline
&{\star}&&0&1\\
&{\bf 0}&&1&0
\end{array}\right )
\left(\begin{array}{c}
\mathbf{x}\\
\hline
y\\
z \end{array}\right)=
\left(\begin{array}{c}
\mathbf{0}\\
\hline
0\\
0 \end{array}\right).$$

Hence $y$ is 0 and $\mathbf{A}(G-u-w)\mathbf{x}=\mathbf{0}.$  Also $z$ depends on $\mathbf{x}$  and the neighbours of $w.$  
The nullity of
$G-u-w$ 
is equal to the nullity of $G$. This is because there is a 1--1 correspondence
between the kernel vectors in $G-u-w$ and the kernel vectors in $G$. Whatever $z$ is, this 1--1 correspondence holds. So the number of linearly independent
vectors in the nullspace of G is equal to the number of linearly independent vectors 
in the nullspace of  $G-u-w.$
 Also, on removing the end vertex and its neighbour,
the non--zero entries of $\mathbf{x}$ restricted to  $G-u-w$ 
will be the same as for  $G$. Hence, the core  and core--forbidden vertices in $G -u-w$
are the same as those in $G$.
\end{proof} 

In a tree, it is possible to remove  end vertices and associated unique neighbours successively until no edges remain. Indeed, 
the  graph obtained by removing all pendant edges in $T$ and in the subgraphs obtained in the process, is $\overline{K_{\eta}}, $ each vertex of which, as expected from Theorem \ref{TheoRemPend}, is a core vertex.
This leads to a well known criterion to determine the nullity of a tree.

\begin{corollary} \label{CorTreeRem2v}  For a tree $T,$  the number of isolated vertices, obtained by the removal of  end vertices and their unique neighbours in  $T$ and in its successive subgraphs, is
 $\eta(T).$
\end{corollary}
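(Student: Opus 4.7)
The plan is to iterate Theorem \ref{TheoRemPend}, using an induction on the number of vertices, and to identify the terminal subgraph as an edgeless graph whose nullity equals its order.

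First I would record a minor extension: the proof of Theorem \ref{TheoRemPend} uses only the block decomposition of $\mathbf{A}$ induced by putting the leaf $w$ and its unique neighbour $u$ last, so the conclusion $\eta(G)=\eta(G-u-w)$ in fact holds for any graph $G$ containing a pendant edge, whether or not $G$ is connected or singular. This is the form I need, because after the first pruning step the graph is in general a forest.

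Next, I would induct on $|V(T)|$. If $|V(T)|\le 1$ the claim is immediate: an empty graph has nullity $0$ and contributes $0$ isolated vertices, while a single vertex has nullity $1$ and is itself isolated. For $|V(T)|\ge 2$, every tree has an end vertex $w$ with unique neighbour $u$; by (the extended) Theorem \ref{TheoRemPend}, $\eta(T)=\eta(T-u-w)$. The graph $T-u-w$ is a forest of strictly smaller order, and since nullity is additive over connected components, applying the induction hypothesis to each tree component of $T-u-w$ shows that continuing the prune-the-leaf-and-its-neighbour process on $T-u-w$ produces exactly $\eta(T-u-w)$ isolated vertices. Combining the two equalities yields $\eta(T)$ isolated vertices in $T$, as required.

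Termination and correctness of the stopping condition deserve a short remark: each step deletes two vertices, so the process halts after finitely many steps; it halts precisely when no pendant edges remain, and in a forest this means no edges remain at all. The surviving graph $\overline{K_{\eta(T)}}$ has adjacency matrix equal to the zero matrix, whose nullity equals its number of rows, confirming that the number of surviving isolated vertices is $\eta(T)$.

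I do not expect any genuine obstacle here; the only point requiring a little care is the mild generalization of Theorem \ref{TheoRemPend} to forests (so that the induction can be carried out after the first deletion), together with the observation that a forest with any edge necessarily has a leaf, which guarantees the pruning can always proceed until only isolated vertices remain.
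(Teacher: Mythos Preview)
Your proposal is correct and follows essentially the same approach as the paper, which simply observes (in the paragraph preceding the corollary) that iterating Theorem \ref{TheoRemPend} until no edges remain yields $\overline{K_{\eta(T)}}$. Your version is in fact more careful than the paper's, since you make explicit the mild extension of Theorem \ref{TheoRemPend} to disconnected or non-singular graphs needed once the first pruning step produces a forest.
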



 Since by Theorem \ref{TheoRemPend}, the vertices of $\overline{K_{\eta}}$ are in CV of $T,$ we can deduce the following result:

\begin{proposition}
	\label{PropT2endCV}
	A singular tree $T$ has at least 2 core vertices  which are end vertices.	
\end{proposition}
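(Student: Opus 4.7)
The plan is to pick any non-zero kernel vector $\mathbf{x}$ of $T$ and locate two distinct core end vertices in its support by a simple maximum-depth argument applied twice with two different rootings.

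Let $\mathbf{x}\neq\mathbf{0}$ be a kernel vector of $T$ and set $S:=\mathrm{supp}(\mathbf{x})\subseteq CV$. I would first observe that $S\neq V$: for any end vertex $w$ with unique neighbour $u$, the equation $(\mathbf{A}\mathbf{x})_w=0$ reads $x_u=0$, so $u\notin S$. The key step is the claim: \emph{if $T$ is rooted at any $r\notin S$ and $v^*\in S$ is chosen of maximum depth, then $v^*$ is an end vertex of $T$.} Suppose $v^*$ has a child $c$ in the rooting. By the choice of $v^*$, $x_c=0$, and $(\mathbf{A}\mathbf{x})_c=0$ gives $x_{v^*}+\sum_g x_g=0$ where $g$ ranges over the children of $c$. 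If $c$ has no children (so $v^*$ is the only neighbour of $c$ in $T$, i.e.\ $c$ is itself an end vertex), the sum is empty and we get $x_{v^*}=0$, contradicting $v^*\in S$. Otherwise $\sum_g x_g=-x_{v^*}\neq 0$, forcing some grandchild $g\in S$ at strictly greater depth than $v^*$, again a contradiction. Hence $v^*$ has no children, and since $v^*\neq r$, it has degree $1$ in $T$ and is therefore an end vertex.

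Choosing any $r\in V\setminus S$ and applying the key claim yields a first core end vertex $v^*$. For the second, I re-root $T$ at $v^*$ and apply the claim to a new maximum-depth element $v^{**}\in S$ in this new rooting. The one thing to rule out is $v^{**}=v^*$: this would force $S=\{v^*\}$, i.e.\ $\mathbf{x}=\alpha\mathbf{e}_{v^*}$ for some $\alpha\neq 0$, but then $(\mathbf{A}\mathbf{x})_u=\alpha\neq 0$ at the unique neighbour $u$ of the end vertex $v^*$, contradicting $\mathbf{x}\in\ker\mathbf{A}$. Thus $v^{**}\neq v^*$, and the key claim identifies $v^{**}$ as a second end vertex lying in $S\subseteq CV$.

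The only place that requires genuine care is the case split inside the key claim: when the child $c$ of the deepest element $v^*$ is itself an end vertex of $T$ one cannot ``push deeper'' to reach a contradiction and must instead read off $x_{v^*}=0$ directly from the leaf equation at $c$. The rest is mechanical bookkeeping of depths in the rooted tree and two elementary observations, namely that the neighbour of an end vertex is always forced out of $S$ and that no kernel vector can be supported on a single non-isolated vertex.
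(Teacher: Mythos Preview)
Your proof is correct and takes a genuinely different route from the paper's. The paper argues via the pendant-edge removal process of Theorem~\ref{TheoRemPend} and Corollary~\ref{CorTreeRem2v}: successively stripping leaf--neighbour pairs preserves both nullity and vertex types, eventually leaving $\overline{K_{\eta}}$; by choosing the removal order appropriately one arranges that two distinct end vertices of $T$ survive among the isolated vertices, hence are core vertices of $T$. Your argument is instead self-contained: it reads the two core leaves directly off the support of a single kernel vector via a maximum-depth argument in a rooted tree, using nothing beyond the equations $\mathbf{A}\mathbf{x}=\mathbf{0}$. The paper's approach has the advantage of fitting the narrative just developed and of exhibiting the link to the reduction to $\overline{K_{\eta}}$; your approach has the advantage of being independent of Theorem~\ref{TheoRemPend} and of making the combinatorics completely explicit. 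One small presentational point: your key claim is stated with the hypothesis $r\notin S$, but in the second application you re-root at $v^*\in S$. You correctly patch this by showing $v^{**}\neq v^*$ directly, which is all the proof of the claim actually needs; it would be cleaner to phrase the claim as ``if $v^*\in S$ is of maximum depth and $v^*\neq r$, then $v^*$ is an end vertex'', so that it applies verbatim both times.
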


\begin{proof}
Starting from any end--vertex in $T,$  if the order of  pendant--edge removals, 
is chosen appropriately, then at least one vertex $u$ of  $\overline{K_{\eta}},$  obtained as in Corollary \ref{CorTreeRem2v}, is  an end--vertex of $T$  and its type in  $T$   is a {\it cv}. 

Similarly, starting from the edge containing the end--vertex $u$ of $T,$   there is  another end--vertex $w$ which is a {\it cv}  of $T.$

%

	\end{proof}

Corollary \ref{CorTreeRem2v} describes  a polynomial--time algorithm to determine the nullity of a tree. A {\it matching} in a bipartite graph is a set of  edges, no two of which share a common vertex. The matching number $t$ is the number of edges in a maximal matching \cite{CvetGutMultZeroeig72}.  Corollary \ref{CorTreeRem2v} and Proposition \ref{PropT2endCV} provide an  immediate  proof of the well known result $\eta(T)=n-2t$ \cite{CvetGutMultZeroeig72}.

\section{Graphs with independent core vertices} \label{SecInd}

In a singular graph, core vertices may be adjacent. Indeed, in a core graph (not $\overline{K_r}$), each edge joins two core vertices. The family of cycles $C_{4k}, k\in {\mathbb N}$ consists of core graphs of nullity 2.

By definition, a singular graph has a non--empty $CV.$ If in a singular graph,  $N(CV)$  is empty, then $CFV$ must be empty and the graph is a core graph. 

 It is convenient to work with graphs for which $CFV_R$  is empty. Removal of  $CFV_R$ from a graph leaves the type of vertices in the resulting subgraph unchanged. 
\begin{definition}\label{DefSlim} A connected singular graph $G$ is a {\it slim graph} if it has an independent $CV$ and $CFV$  is precisely $N(CV)$.
\end{definition}
From Definition \ref{DefSlim}, 
it follows that  a  singular graph is slim if and only if its $CV$ is an independent set and its  $CFV_R$  is empty.

For a core--labelled graph
  the adjacency matrix $\mathbf{A}$  is a block matrix of the form,
\begin{equation}
\mathbf{A} = \left[\begin{array}{c|c|c}
\mathbf{0} & \mathbf{Q} & \mathbf{0} \\ \hline
\mathbf{Q}^\intercal & \mathbf{N} & \mathbf{R} \\ \hline
\mathbf{0} & \mathbf{R}^\intercal & \mathbf{M}
\end{array}\right] \label{EqCoreL}
\end{equation}
where $\mathbf{Q}$ is $CV \times N(CV)$, $\mathbf{R}$ is $N(CV) \times CFV_R$, $\mathbf{N}$ is $N(CV) \times N(CV)$ and $\mathbf{M}$ is $CFV_R \times CFV_R$. The submatrix $\mathbf{Q}$ plays an important role to relate the linear independence of its columns to the nullity of $G$.

\begin{lemma} \label{LemNullQ}
	Let $G$ be a singular core--labelled  graph. Then
	$\eta(\mathbf{Q}^\intercal)=\eta(G)$.
	
\end{lemma}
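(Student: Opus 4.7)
The plan is to establish a linear bijection between $\ker(\mathbf{A})$ and $\ker(\mathbf{Q}^\intercal)$ by exploiting the block structure in (\ref{EqCoreL}) together with the very definition of $CFV$.

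First, I would partition a generic vector $\mathbf{x}$ as $(\mathbf{x}_1,\mathbf{x}_2,\mathbf{x}_3)^\intercal$ conforming to $V = CV \,\dot\cup\, N(CV) \,\dot\cup\, CFV_R$, and expand $\mathbf{A}\mathbf{x}=\mathbf{0}$ using (\ref{EqCoreL}) to obtain the three block equations
$$\mathbf{Q}\mathbf{x}_2 = \mathbf{0}, \qquad \mathbf{Q}^\intercal\mathbf{x}_1 + \mathbf{N}\mathbf{x}_2 + \mathbf{R}\mathbf{x}_3 = \mathbf{0}, \qquad \mathbf{R}^\intercal\mathbf{x}_2 + \mathbf{M}\mathbf{x}_3 = \mathbf{0}.$$
The two zero blocks in (\ref{EqCoreL}) — reflecting the independence of $CV$ and the absence of $CV$-to-$CFV_R$ edges — are what make this decoupling clean.

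Next, I would invoke Definition \ref{core} directly: every vertex in $N(CV) \cup CFV_R$ lies in $CFV$, so \emph{every} kernel vector of $\mathbf{A}$ is forced to vanish on those coordinates. Hence for any $\mathbf{x}\in\ker(\mathbf{A})$ we have $\mathbf{x}_2=\mathbf{0}$ and $\mathbf{x}_3=\mathbf{0}$, and the middle block equation collapses to $\mathbf{Q}^\intercal \mathbf{x}_1 = \mathbf{0}$. This gives a linear injection $\ker(\mathbf{A}) \hookrightarrow \ker(\mathbf{Q}^\intercal)$ via $\mathbf{x}\mapsto \mathbf{x}_1$.

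For the reverse direction, I would check that for any $\mathbf{y}\in\ker(\mathbf{Q}^\intercal)$, the zero-padded vector $(\mathbf{y},\mathbf{0},\mathbf{0})^\intercal$ satisfies all three block equations trivially (using the two zero off-diagonal blocks together with $\mathbf{Q}^\intercal\mathbf{y}=\mathbf{0}$). Hence this padding is a linear injection $\ker(\mathbf{Q}^\intercal) \hookrightarrow \ker(\mathbf{A})$ that inverts the previous map, so the two kernels are isomorphic and their dimensions coincide, proving $\eta(\mathbf{Q}^\intercal)=\eta(G)$.

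The only point needing care is conceptual rather than computational: one must recognize that $\mathbf{x}_2=\mathbf{0}$ and $\mathbf{x}_3=\mathbf{0}$ is \emph{given for free} by the defining property of $CFV$ applied across \emph{all} kernel vectors simultaneously, rather than something to be squeezed out of the three block equations alone (which in isolation could leave $\mathbf{N},\mathbf{R},\mathbf{M}$ allowing nonzero solutions). Once this observation is in hand, the proof is essentially a routine block-matrix calculation.
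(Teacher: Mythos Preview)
Your proposal is correct and follows essentially the same approach as the paper: use the block form (\ref{EqCoreL}), observe that every kernel vector of $\mathbf{A}$ has the shape $(\mathbf{x}_{CV},\mathbf{0},\mathbf{0})$ by the definition of $CFV$, and then note that $\mathbf{A}(\mathbf{x}_{CV},\mathbf{0},\mathbf{0})^\intercal=\mathbf{0}$ is equivalent to $\mathbf{Q}^\intercal\mathbf{x}_{CV}=\mathbf{0}$. Your write-up is simply more explicit than the paper's --- you spell out all three block equations and the two mutually inverse injections, and you flag clearly that $\mathbf{x}_2=\mathbf{x}_3=\mathbf{0}$ comes from the defining property of $CFV$ rather than from the equations --- but the underlying argument is identical.
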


\begin{proof}
	For a core--labelling of $G$, let $\mathbf{x}^{(i)}$ be one of the $\eta(G)$  kernel vectors of $\mathbf{A}$. The vector $\mathbf{x}^{(i)}$  is of the form $\left(\mathbf{x}_{CV}^{(i)}, {\bf 0}\right)$  and $\mathbf{    x}_{CV}^{(i)} = \left(\alpha_1,...,\alpha_{|CV|}\right) \not ={\bf 0}.$   Now, $\mathbf{Ax}^{(i)} = \mathbf{0}$ if and only if $\mathbf{Q}^\intercal \mathbf{x}_{CV}^{(i)} = \mathbf{0}$. Thus there are as many linearly independent kernel vectors of ${\bf A}$ as there are of $\mathbf{Q}^{\intercal}.$  It follows that $\textup{Dim}\left(\textup{Ker}(\mathbf{Q}^\intercal)\right) = \textup{Dim}\left(\textup{Ker}(\mathbf{A})\right)$.
\end{proof}

\begin{lemma} \label{LemQdep} Let $G$ be  a singular core--labelled  graph.  
	For a core--labelling of $G$, the columns of $\mathbf{Q}^{\intercal}$ are linearly dependent and $\textup{rank}(\mathbf{Q}) < |CV|$.
\end{lemma}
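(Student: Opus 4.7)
The plan is to derive both conclusions directly from the preceding Lemma \ref{LemNullQ}, since that lemma already identifies $\eta(\mathbf{Q}^\intercal)$ with $\eta(G)$.

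First I would note that, by assumption, $G$ is singular, so $\eta(G)\ge 1$. Applying Lemma \ref{LemNullQ}, this gives $\eta(\mathbf{Q}^\intercal)\ge 1$, i.e., $\mathbf{Q}^\intercal$ has a non-trivial nullspace. A non-trivial nullspace of $\mathbf{Q}^\intercal$ says precisely that there is a non-zero vector $\mathbf{x}_{CV}$ with $\mathbf{Q}^\intercal \mathbf{x}_{CV}=\mathbf{0}$, which is the statement that the columns of $\mathbf{Q}^\intercal$ are linearly dependent. This immediately yields the first assertion.

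For the second assertion I would use the rank--nullity theorem applied to $\mathbf{Q}^\intercal$, which has $|CV|$ columns (since $\mathbf{Q}$ is $|CV|\times |N(CV)|$). Rank--nullity gives
\[
\textup{rank}(\mathbf{Q}^\intercal) \;=\; |CV| - \eta(\mathbf{Q}^\intercal) \;=\; |CV| - \eta(G),
\]
where the last equality is Lemma \ref{LemNullQ}. Since $\eta(G)\ge 1$, we get $\textup{rank}(\mathbf{Q}^\intercal)<|CV|$. Combining with the standard fact $\textup{rank}(\mathbf{Q})=\textup{rank}(\mathbf{Q}^\intercal)$ then closes the proof.

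I do not anticipate a serious obstacle here, since the lemma is essentially a corollary of the nullspace identity already established in Lemma \ref{LemNullQ} together with rank--nullity. The only point to be careful about is keeping the roles of $\mathbf{Q}$ and $\mathbf{Q}^\intercal$ straight: the statement about linear dependence is most naturally phrased in terms of the columns of $\mathbf{Q}^\intercal$ (equivalently, the rows of $\mathbf{Q}$ indexed by $CV$), and one must note explicitly that $\mathbf{Q}^\intercal$ has $|CV|$ columns in order to conclude that the rank drops strictly below $|CV|$.
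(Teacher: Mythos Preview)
Your proposal is correct and follows essentially the same approach as the paper: both use the identification of $\ker(\mathbf{Q}^\intercal)$ with $\ker(\mathbf{A})$ from Lemma~\ref{LemNullQ} to produce a non-trivial relation among the columns of $\mathbf{Q}^\intercal$, and then pass from column rank of $\mathbf{Q}^\intercal$ to $\textup{rank}(\mathbf{Q})$ via the equality of row and column rank. Your explicit invocation of rank--nullity is a mild elaboration of the paper's one-line appeal to ``column rank equals row rank,'' but the underlying argument is identical.
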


\begin{proof}
	Since $\text{Dim}\left(\text{Ker}(\mathbf{A})\right) \geq 1$, then $\text{Ker}(\mathbf{Q}^\intercal)\not =\{{\bf 0}\}.$   Thus there is a non-zero linear combination of the columns of $\mathbf{Q}^\intercal$ that is equal to $\mathbf{0}$, that is  $\mathbf{Q}^\intercal \mathbf{x}_{CV} = \mathbf{0}$. Hence  the columns of $\mathbf{Q}^\intercal$ are  linearly dependent. Since column rank is equal to row rank, it follows that $\textup{rank}(\mathbf{Q}) < |CV|$.
\end{proof}

\begin{figure}[htbp!]
\begin{center}
\includegraphics[width=4cm]{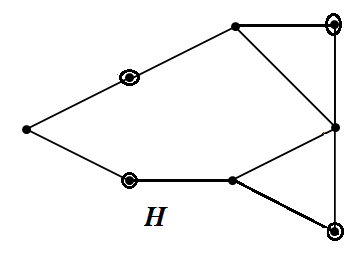} \hspace*{1cm}
\includegraphics[width=4cm]{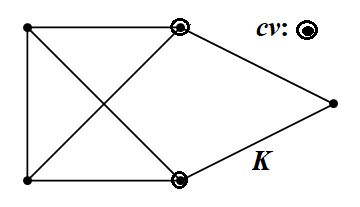}
\caption{In graph $H,$\,the number of vertices in CV and in NCV are the same and in graph $K,$\,$|CV|<|N(CV)|$.}  \label{FigNCV}
\end{center}
\end{figure}

The relative number  of vertices in $CV$ and in $N(CV)$ may differ. 
For  the graphs $H$ and $K$ of  Figure \ref{FigNCV}  $|CV|=|N(CV)|$ and   $|CV|<|N(CV)|, $  respectively.  In Section \ref{SecBip},  we see that 
graphs with 
   $|CV|>|N(CV)| $  exist, a property satisfied by  minimal configurations (defined in Definition \ref{Defmc}).

\begin{theorem} \label{TheoNullCVrnkQ}
	Let $G$ be a  singular core-labelled graph with independent core vertices. Then $\eta(G) = |CV| - \textup{rank}(\mathbf{Q})$.
\end{theorem}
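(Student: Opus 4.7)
The plan is to combine Lemma \ref{LemNullQ} with the rank-nullity theorem applied to the submatrix $\mathbf{Q}^\intercal$. The heavy lifting has essentially already been done in Lemma \ref{LemNullQ}, which identifies the nullspace of $\mathbf{A}$ with the nullspace of $\mathbf{Q}^\intercal$; all that remains is to convert this nullity statement into a rank statement about $\mathbf{Q}$.

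First I would recall the block structure of $\mathbf{A}$ from (\ref{EqCoreL}) and observe that, since $G$ is core-labelled with independent core vertices, the submatrix $\mathbf{Q}^\intercal$ has exactly $|CV|$ columns (indexed by the core vertices) and $|N(CV)|$ rows. Next, I would invoke Lemma \ref{LemNullQ} to write $\eta(G) = \eta(\mathbf{Q}^\intercal) = \textup{Dim}\bigl(\textup{Ker}(\mathbf{Q}^\intercal)\bigr)$.

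Then I would apply the rank-nullity theorem to the linear map defined by $\mathbf{Q}^\intercal$, which gives
\[
\textup{rank}(\mathbf{Q}^\intercal) + \textup{Dim}\bigl(\textup{Ker}(\mathbf{Q}^\intercal)\bigr) = |CV|.
\]
Using the standard fact that the row rank and column rank of any matrix coincide, we have $\textup{rank}(\mathbf{Q}^\intercal) = \textup{rank}(\mathbf{Q})$. Substituting and rearranging yields $\eta(G) = |CV| - \textup{rank}(\mathbf{Q})$, as required.

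I do not anticipate a serious obstacle here: the independence of $CV$ is what forces the kernel vectors of $\mathbf{A}$ to have the form $(\mathbf{x}_{CV}, \mathbf{0})$ exploited in Lemma \ref{LemNullQ}, and everything else is linear algebra. The only point worth flagging explicitly in the write-up is that $|CV|$ is indeed the correct dimension to use in the rank-nullity identity, i.e.\ that we are applying it with $\mathbf{Q}^\intercal$ viewed as a map from $\mathbb{R}^{|CV|}$ to $\mathbb{R}^{|N(CV)|}$, consistent with Lemma \ref{LemQdep}'s conclusion that $\textup{rank}(\mathbf{Q}) < |CV|$ (so $\eta(G) \geq 1$, matching singularity).
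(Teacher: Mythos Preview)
Your proposal is correct and follows essentially the same approach as the paper: apply Lemma~\ref{LemNullQ} to get $\eta(G)=\textup{Dim}(\textup{Ker}(\mathbf{Q}^\intercal))$, then use the dimension (rank--nullity) theorem on $\mathbf{Q}^\intercal$ with domain dimension $|CV|$, together with $\textup{rank}(\mathbf{Q}^\intercal)=\textup{rank}(\mathbf{Q})$.
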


\begin{proof}  By the well known dimension theorem,\\ \hspace*{2cm} Dim(Domain$({\bf Q}{^\intercal}))=$ Dim(Ker$({\bf Q}{^\intercal}) ) +$ Dim(Im$({\bf Q}{^\intercal})).$

Now Dim(Domain$({\bf Q}{^\intercal}))= |CV|.$  By Lemma \ref{LemNullQ},  Dim(Ker$({\bf Q}{^\intercal}) )=\eta(G).$ Hence rank$({\bf Q})= $  rank$({\bf Q}^{\intercal})= |CV|   - \eta (G).$  \end{proof}

It is clear that for a singular core--labelled graph, if  $|CV|< |N(CV)|,$  then the columns of the  $|CV|\times  |N(CV)|$ matrix  ${\bf Q}$  are linearly dependent. For $|CV|= |N(CV)|,$ by Theorem \ref{TheoNullCVrnkQ},    $\textup{rank}(\mathbf{Q}) < |CV| $  and thus  the $|N(CV)|$ columns of $\mathbf{Q}$  are linearly dependent.  
We shall now determine a necessary and sufficient condition for $\mathbf{Q}$  to have full column rank. 

\begin{theorem} \label{TheoQindep} Let $G$ be  a singular core--labelled graph. 
	The matrix $\mathbf{Q}$ has linearly independent columns if and only if $\eta(G) = |CV| - |N(CV)|$.
\end{theorem}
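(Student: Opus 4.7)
The plan is to derive this theorem as a direct consequence of Theorem \ref{TheoNullCVrnkQ}, which already gives a clean formula for $\eta(G)$ in terms of $\textup{rank}(\mathbf{Q})$ and $|CV|$.

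First, I would unpack what ``the columns of $\mathbf{Q}$ are linearly independent'' means in terms of rank. Since $\mathbf{Q}$ is the $|CV| \times |N(CV)|$ submatrix defined in (\ref{EqCoreL}), it has $|N(CV)|$ columns, and linear independence of these columns is equivalent to $\textup{rank}(\mathbf{Q}) = |N(CV)|$. This is the only interpretive step.

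Next, I would plug this into Theorem \ref{TheoNullCVrnkQ}, which states $\eta(G) = |CV| - \textup{rank}(\mathbf{Q})$. Substituting $\textup{rank}(\mathbf{Q}) = |N(CV)|$ gives $\eta(G) = |CV| - |N(CV)|$, and conversely if $\eta(G) = |CV| - |N(CV)|$ then the same identity forces $\textup{rank}(\mathbf{Q}) = |N(CV)|$, which means $\mathbf{Q}$ has full column rank. So the biconditional is immediate.

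There is essentially no obstacle here; the work was all done in Theorem \ref{TheoNullCVrnkQ}. The only thing worth remarking on is a sanity check on dimensions: since $G$ is singular we have $\eta(G) \geq 1$, so full column rank of $\mathbf{Q}$ forces $|CV| \geq |N(CV)| + 1$, consistent with the examples (such as minimal configurations) in which $|CV| > |N(CV)|$ alluded to in the paragraph preceding the statement. I would mention this briefly as motivation but not as part of the proof proper, since the equivalence itself is a one-line rearrangement of the rank--nullity identity from Theorem \ref{TheoNullCVrnkQ}.
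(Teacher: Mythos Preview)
Your proposal is correct and follows essentially the same argument as the paper: both observe that linear independence of the columns of $\mathbf{Q}$ amounts to $\textup{rank}(\mathbf{Q}) = |N(CV)|$, and then invoke Theorem~\ref{TheoNullCVrnkQ} to convert this into the stated formula for $\eta(G)$. Your added sanity check on dimensions is a nice touch but, as you note, not needed for the proof itself.
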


\begin{proof}  The matrix $\mathbf{Q}$  has full rank  if and only if rank$({\bf Q})= {\rm Dim}({\rm Im}({\bf Q}))=|N(CV)|.$   By Theorem \ref{TheoNullCVrnkQ}, the necessary and sufficient condition for the matrix $\mathbf{Q}$ to have linearly independent columns  is that $\eta(G) = |CV| - |N(CV)|$.  \end{proof}

Recall that the vertex set $V$ of a core--labelled graph is partitioned into 
 $CV$, $N(CV)$  and $CFV_R.$ On  deleting $N(CV)$ and $CV$ from a graph, the subgraph induced by $CFV_R$  remains. 

\begin{theorem}\label{TheoCFVrInv}
The subgraph induced by $CFV_R$  for a core--labelled graph  is non--singular.
\end{theorem}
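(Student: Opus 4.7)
My plan is to proceed by contradiction and exploit the core--labelled block form \eqref{EqCoreL}. Assume $\mathbf{M}$ is singular and fix a non-zero $\mathbf{y}$ with $\mathbf{M}\mathbf{y}=\mathbf{0}$. Since every vertex of $CFV_R$ is core--forbidden, every kernel vector of $\mathbf{A}$ must vanish on the $CFV_R$-coordinates. So it suffices to manufacture a kernel vector of $\mathbf{A}$ whose restriction to the $CFV_R$-block is this non-zero $\mathbf{y}$ in order to obtain the required contradiction.

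First I would test the naive extension $\tilde{\mathbf{x}}=(\mathbf{0}_{CV},\mathbf{0}_{N(CV)},\mathbf{y})$. Multiplying out in the block form \eqref{EqCoreL}, the $CV$-block of $\mathbf{A}\tilde{\mathbf{x}}$ is zero (this is the crucial off--diagonal zero in the $CV\times CFV_R$ position, which is present precisely because in a core--labelled graph the set $CFV_R$ consists of vertices non--adjacent to any core vertex), and the $CFV_R$-block is $\mathbf{M}\mathbf{y}=\mathbf{0}$. The only potentially non--vanishing part is the $N(CV)$-block, namely $\mathbf{R}\mathbf{y}$. Hence if $\mathbf{R}\mathbf{y}=\mathbf{0}$, then $\tilde{\mathbf{x}}\in\ker(\mathbf{A})$ has non--zero support in $CFV_R$ and we are done.

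If instead $\mathbf{R}\mathbf{y}\neq \mathbf{0}$, the next step is to absorb the discrepancy with a component supported on $CV$: look for $\mathbf{x}_1$ satisfying $\mathbf{Q}^\intercal\mathbf{x}_1=-\mathbf{R}\mathbf{y}$, so that the corrected vector $(\mathbf{x}_1,\mathbf{0},\mathbf{y})$ becomes a kernel vector of $\mathbf{A}$ with non--zero $CFV_R$-block. Such a correction is available exactly when $\mathbf{R}\mathbf{y}\in\textup{Im}(\mathbf{Q}^\intercal)$, so this is the containment that the argument has to deliver.

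The main obstacle is precisely to justify $\mathbf{R}\mathbf{y}\in\textup{Im}(\mathbf{Q}^\intercal)$. My plan here is to use the symmetry of $\mathbf{A}$: Lemma~\ref{LemNullQ}, together with the fact that both $N(CV)$ and $CFV_R$ sit inside $CFV$, forces $\ker(\mathbf{A})=\{(\mathbf{x}_1,\mathbf{0},\mathbf{0}):\mathbf{x}_1\in\ker(\mathbf{Q}^\intercal)\}$. Taking orthogonal complements gives $\textup{Im}(\mathbf{A})=\ker(\mathbf{A})^\perp$, and this pins down the admissible shape of elements of $\textup{Im}(\mathbf{A})$ through the constraint that their $CV$-block lies in $\textup{Im}(\mathbf{Q})$. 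Combined with the identity $(\mathbf{0},\mathbf{R}\mathbf{y},\mathbf{0})=\mathbf{A}(\mathbf{0},\mathbf{0},\mathbf{y})\in\textup{Im}(\mathbf{A})$, this should be massaged into the needed inclusion. If the direct linear--algebra route proves too blunt, I would fall back on induction on $|CFV_R|$, peeling off one $CFV_R$-vertex at a time while tracking nullity via the Interlacing Theorem and vertex types in the spirit of Theorem~\ref{TheoRemPend}.
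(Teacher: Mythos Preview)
Your plan is essentially the paper's own argument, but you are more candid about the one real difficulty: to lift a nonzero $\mathbf{y}\in\ker(\mathbf{M})$ to a kernel vector $(\mathbf{x}_1,\mathbf{0},\mathbf{y})$ of $\mathbf{A}$ one must solve $\mathbf{Q}^\intercal\mathbf{x}_1=-\mathbf{R}\mathbf{y}$, i.e.\ establish $\mathbf{R}\mathbf{y}\in\textup{Im}(\mathbf{Q}^\intercal)$. The paper simply asserts the equivalence ``$\mathbf{M}\mathbf{y}=\mathbf{0}$ for some $\mathbf{y}\ne\mathbf{0}$ iff there exists $\mathbf{x}$ with $\mathbf{A}(\mathbf{x},\mathbf{0},\mathbf{y})^\intercal=\mathbf{0}$'' without justification; you try to supply one via $\textup{Im}(\mathbf{A})=\ker(\mathbf{A})^\perp$. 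That identity, however, only says that a vector lies in $\textup{Im}(\mathbf{A})$ iff its $CV$-block sits in $\textup{Im}(\mathbf{Q})$; it places no constraint whatsoever on the $N(CV)$-block. So from $(\mathbf{0},\mathbf{R}\mathbf{y},\mathbf{0})=\mathbf{A}(\mathbf{0},\mathbf{0},\mathbf{y})\in\textup{Im}(\mathbf{A})$ you learn nothing new, and no amount of ``massaging'' will produce $\mathbf{R}\mathbf{y}\in\textup{Im}(\mathbf{Q}^\intercal)$ from it.

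The gap is in fact unfillable at this level of generality. Take the graph on $\{1,\dots,7\}$ with edge set $\{\{1,3\},\{1,4\},\{2,3\},\{2,4\},\{3,5\},\{5,6\},\{6,7\}\}$. A direct computation gives $\ker(\mathbf{A})=\textup{span}\{(1,-1,0,0,0,0,0)^\intercal\}$, so $CV=\{1,2\}$ is independent, $N(CV)=\{3,4\}$, and $CFV_R=\{5,6,7\}$; yet $G[CFV_R]\cong P_3$ is singular. Here $\mathbf{y}=(1,0,-1)^\intercal\in\ker(\mathbf{M})$, $\mathbf{R}\mathbf{y}=(1,0)^\intercal$, while $\textup{Im}(\mathbf{Q}^\intercal)=\textup{span}\{(1,1)^\intercal\}$, so $\mathbf{R}\mathbf{y}\notin\textup{Im}(\mathbf{Q}^\intercal)$ and no lift $(\mathbf{x}_1,\mathbf{0},\mathbf{y})$ exists. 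Thus both your argument and the paper's asserted equivalence fail, and Theorem~\ref{TheoCFVrInv} is false as stated for arbitrary core--labelled graphs. (Its only downstream use is Theorem~\ref{TheoPM}, for trees; there a valid argument can be given, but it must go through tree-specific facts rather than the block manipulation attempted here.) Your induction/Interlacing fallback would run into the same obstruction in this example.
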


\begin{proof}
Using an  adjacency matrix ${\bf A}$ of the form (\ref{EqCoreL}), we need to show that $\mathbf {My=0}$ if and only if $\mathbf{y=0}.$ For a core--labelling, all kernel vectors of $\textbf{A}(G)$ are of the form ${\bf z}=\left(\begin{array}{c}
{\bf x}\\
{\bf 0}\\
{\bf 0}
\end{array}\right).$

But  ${\bf My=0}$ for some ${\bf y}\not ={\bf 0} $  if and only if  there exists   ${\bf x} $ such that ${\bf A}\left(\begin{array}{c}
{\bf x}\\
{\bf 0}\\
{\bf y}
\end{array}\right)={\bf 0}.$ This contradicts the form of the kernel vector for a core--labelling. Hence no kernel vectors exist for ${\bf M}.$
\end{proof}

Graphs with independent core vertices include the family of half cores. A \textit{half core} is a  bipartite graph with one partite set being the set $CV$  and the other partite set being $CFV$. In Section \ref{SecTrees}, we shall see that trees also have independent core vertices.

At this stage, the case for unicyclic graphs is worth mentioning.  The coalescence of two graphs is obtained by identifying a vertex of one graph with a vertex of the other graph. If none of the two graphs is $K_1$, then this vertex becomes a cut vertex.  Unicyclic graphs can be considered  
  to be the coalescence of a cycle $C_r$   with $r$ trees (some or all of which  may be the isolated vertex  $P_1$), each tree $T_v$ coalesced with $C_r$ at a unique vertex $v$ of the cycle.  If $r\not =4k,\ k\in {\mathbb Z}^+, $ then the unicyclic graph has independent core vertices. Since the nullity of $C_4$ is 2, using Theorem \ref{TheoRemPend}, the following result is immediate.
  
  \begin{theorem}
 \label{TheoUnicyc}  Let $G$ be a unicyclic graph  with cycle $C_r$ where $r=4k.$ 
\begin{enumerate}[{\rm (i)}]
	\item   
 If  the vertex $v$ of at least one  tree $T_v$ which is coalesced with the cycle is  a core--forbidden vertex, then the unicyclic graph also has  independent core vertices. 
 \item  If  the vertices $v$ of each   tree $T_v$ which is coalesced with the cycle is  a core vertex, then the unicyclic graph must have nullity at least 2. 
\end{enumerate}
 \end{theorem}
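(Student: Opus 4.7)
The plan is to apply Theorem \ref{TheoRemPend} iteratively; each pendant-edge removal preserves both the nullity and the types of the remaining vertices, and the proof of Theorem \ref{TheoRemPend} identifies the ``unique neighbour'' $u$ of the removed leaf as a core-forbidden vertex of the current subgraph (hence, by preservation, of the original graph). The structural observation driving everything is that any vertex $w\in V(T_v)\setminus\{v\}$ has exactly the same neighbours in $G$ as in $T_v$, so its leaf-status in any partially stripped subgraph is unaffected by the cycle.

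For part (i), fix a cycle vertex $v^*$ with $v^*$ core-forbidden in $T_{v^*}$. By Corollary \ref{CorTreeRem2v} combined with Theorem \ref{TheoRemPend}, any pendant-edge stripping of the stand-alone tree $T_{v^*}$ ends with exactly $\eta(T_{v^*})$ isolated vertices, all of which are core vertices of $T_{v^*}$. Since $v^*$ is core-forbidden in $T_{v^*}$ it cannot survive, and is therefore removed at some step as the unique neighbour of a leaf $w$. By the structural observation above, every step preceding this one---as well as the $(w,v^*)$ step itself---can be replicated inside $G$. At the removal step, $v^*$ is identified as core-forbidden in the current $G$-subgraph, and by preservation of types under the earlier strippings it is core-forbidden in $G$ itself. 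Removing $v^*$ breaks $C_{4k}$ into a path and disconnects $T_{v^*}$ into subtrees, so $G-v^*$ is a forest; trees (and hence forests) have independent core vertices by the results of Section \ref{SecTrees}. Finally, since $v^*$ is core-forbidden in $G$, every kernel vector $\mathbf{x}$ of $G$ has $x_{v^*}=0$, and its restriction to $V\setminus\{v^*\}$ lies in $\ker\mathbf{A}(G-v^*)$; consequently the core vertex set of $G$ is contained in that of $G-v^*$. Being a subset of an independent set, it is itself independent.

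For part (ii), each cycle vertex $v$ is a core vertex of $T_v$, so by Corollary \ref{CorTreeRem2v} there is a stripping of $T_v$ that leaves $v$ as one of the $\eta(T_v)$ resulting isolated vertices. Each step of this stripping removes a pair lying entirely in $V(T_v)\setminus\{v\}$ and can be carried out in $G$ by the same structural observation. Performing these strippings for every $v$ reduces $G$ to the cycle $C_{4k}$ together with $\sum_{v}(\eta(T_v)-1)$ additional isolated vertices. Iterated application of Theorem \ref{TheoRemPend} then yields
\[
\eta(G)\;=\;\eta(C_{4k})+\sum_{v}(\eta(T_v)-1)\;=\;2+\sum_{v}(\eta(T_v)-1)\;\geq\;2.
\]

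The delicate point is the deduction in part (i) that $v^*$ is core-forbidden in $G$; once this is in hand the rest follows cleanly from Theorem \ref{TheoRemPend}, Corollary \ref{CorTreeRem2v}, and the independence of the core vertex set in forests.
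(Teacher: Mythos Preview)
Your strategy—iterating Theorem~\ref{TheoRemPend} to peel the pendant trees off the cycle—is precisely what the paper has in mind; the paper simply declares the theorem ``immediate'' from Theorem~\ref{TheoRemPend} and the fact that $\eta(C_{4k})=2$, without writing any details. Your write-up supplies those details and the overall logic is correct, but two existence claims about stripping orders are asserted rather than proved, and both need a short justification.

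In part~(i) you write that since $v^*$ is core-forbidden in $T_{v^*}$ it ``is therefore removed at some step as the unique neighbour of a leaf $w$.'' Not surviving only means $v^*$ is removed; it could in principle be removed as the \emph{leaf} of some pair, and the proof of Theorem~\ref{TheoRemPend} forces the \emph{unique-neighbour} entry $y$ to vanish, not the leaf entry. You must exhibit an order in which $v^*$ is never the leaf. One fix: root $T_{v^*}$ at $v^*$ and at each step strip a leaf of maximal depth together with its parent. Then $v^*$ (at depth~$0$) is never selected as the leaf; and since types are preserved and an isolated vertex is a core vertex, $v^*$ cannot survive, so it is eventually removed as a parent. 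Replaying these steps in $G$ (legitimate by your structural observation) then identifies $v^*$ as core-forbidden in $G$, and the rest of your argument for~(i) goes through.

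In part~(ii) you cite Corollary~\ref{CorTreeRem2v} for the claim that some stripping of $T_v$ leaves $v$ among the isolated survivors, but that corollary only gives the \emph{number} of survivors, not that a prescribed core vertex can be one of them. Here observe that a core vertex can have no leaf neighbour in its tree: a leaf $\ell\sim v$ would force $x_v=0$ via the row of $\mathbf{Ax}=\mathbf{0}$ at $\ell$. Hence, as long as the component of $v$ has at least two vertices, it contains a leaf $\ell\neq v$ whose unique neighbour is also $\neq v$; strip that pair and repeat, noting that Theorem~\ref{TheoRemPend} keeps $v$ a core vertex throughout. This yields a stripping of $T_v$ that never touches $v$, and your count $\eta(G)=2+\sum_v(\eta(T_v)-1)\geq 2$ then follows.
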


\section{Bipartite Minimal Configurations}  \label{SecBip}
In \cite{SciCoefx97, SciConstrNullOne98,SciCHznSingGr07}, the concept of {\it minimal configurations} (MCs)  as  admissible subgraphs, that go to construct a singular graph, is introduced. It is shown that there are $\eta $  MCs as subgraphs of a singular  graph $G$ of nullity $\eta >0.$  
A MC is a graph of nullity 1 and its adjacency matrix ${\bf A} $  satisfies ${\bf Ax}={\bf 0}$ where ${\bf x}\not ={\bf 0}$  is the  generator  of the nullspace of  the adjacency matrix ${\bf A}$  of $G$. The core vertices of a MC induce a subgraph termed the {\it core} $F$  with  respect to ${\bf x}.$ Among singular graphs  with core $F$ and kernel vector ${\bf x},$ a MC has the least number of vertices and  there are no edges joining pairs of core--forbidden vertices. For instance,  the path $P_7$  on 7 vertices is a MC with ${\bf x}=(1,0,-1,0,1,0,-1)^{\intercal}.$

\begin{definition}
 \label{Defmc}
	A \textit{minimal configuration} (MC) is a singular graph on a vertex set $V$ which is either $K_1$ or if $|V| \geq 3$, then it has a core $F = G\left[CV\right]$ and periphery $\mathcal{P} =V\backslash CV$ satisfying the following conditions, \vspace{-3mm}
	\begin{enumerate}[\rm (i)]
		\item $\eta(G) = 1$,
		\item $\mathcal{P} = \emptyset$ or $\mathcal{P}$ induces a graph consisting of isolated vertices,
		\item $\left|\mathcal{P}\right| + 1 = \eta\left(F\right)$.
	\end{enumerate}
\end{definition}

Note that a MC $\Gamma $ is connected. To see this, suppose  $\Gamma $  is the disjoint union $G_1\dot \cup G_2$ of the graphs  $G_1$  and $ G_2,$ labelled so that the core vertices of $G_1$  are  labelled first followed by its {\it cfv}, then the cv of $G_2$  followed by its \textit{cfv}. There exists  a nullspace vector $({\bf x}_1,{\bf 0},{\bf x}_2,{\bf 0}),$ of ${\bf A}$ with each entry of ${\bf x}_1$  and of ${\bf x}_2$ non-zero. Since $({\bf x}_1,{\bf 0},{\bf 0},{\bf 0}),$ and $({\bf 0},{\bf 0},{\bf x}_2,{\bf 0}),$  are conformal linearly independent vectors in the nullspace of 
 ${\bf A},$  the nullity of $G$ is at least 2, a contradiction.  For the nullity to be 1, it follows without loss of generality, that $\mathbf{x}_2={\bf 0}. $  But then all vertices in $G_2$ lie in the periphery and by definition of MC, they form an independent set. Hence $G_2$ consists of isolated vertices that add $|G_2|\ (>0)$ to the nullity of $G_1$, a contradiction. Hence $G$ must consist  of one component only.  

 The $n$--vertex set of a  bipartite graph $G(V_1,V_2,E)$ is partitioned into independent sets $V_1$   and $V_2$ and has edges in $E$  between vertices in $V_1$ and vertices in $V_2.$ 
	 If the vertices in $V_1$ are labelled first, then the adjacency matrix of $G$ is of the form 
\begin{equation}
\mathbf{A} = \left(\begin{array}{c|c}
\mathbf{0} & \mathbf{S} \\ \hline
\mathbf{S}^\intercal & \mathbf{0}
\end{array}\right),  \label{EqBip}\end{equation} where the $|V_1| \times |V_2|$ matrix $\mathbf{S}$ describes the edges between $V_1$ and $V_2$. The nullity of ${\bf A}$ is  $n-2\  {\rm rank}({\bf S }). $ We have proved the following result:

\begin{proposition}\label{PropBipParity}
The nullity of the adjacency matrix of an $n$--vertex  bipartite graph and $n$ are of the same parity.
\end{proposition}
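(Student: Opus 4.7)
The proposition is essentially a one-line consequence of the bipartite block form (\ref{EqBip}) combined with the rank-nullity theorem, so my plan is to verify the rank identity $\textup{rank}(\mathbf{A}) = 2\,\textup{rank}(\mathbf{S})$ carefully and then read off the parity statement.

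First I would invoke the core fact used in the paragraph just before the statement: for an adjacency matrix of the block form in (\ref{EqBip}), one has $\textup{rank}(\mathbf{A}) = 2\,\textup{rank}(\mathbf{S})$. This can be justified by row/column operations: the top block-row shows that the non-zero columns of the first $|V_1|$ rows span a space of dimension $\textup{rank}(\mathbf{S})$, and independently the bottom block-row contributes a space of dimension $\textup{rank}(\mathbf{S}^\intercal) = \textup{rank}(\mathbf{S})$, with the two contributions lying in disjoint coordinate subspaces (rows indexed by $V_1$ versus $V_2$) and therefore summing directly. Alternatively, $\textup{rank}$ follows from the Frobenius rank formula applied to the block anti-diagonal.

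Next I would apply the dimension theorem to $\mathbf{A}$, acting on $\mathbb{R}^n$, which gives
\[
\eta(G) \;=\; n - \textup{rank}(\mathbf{A}) \;=\; n - 2\,\textup{rank}(\mathbf{S}).
\]
Since $2\,\textup{rank}(\mathbf{S})$ is an even non-negative integer, subtracting it from $n$ does not alter parity, so $\eta(G) \equiv n \pmod{2}$, which is exactly the claim.

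There is really no obstacle here; the only point that deserves a sentence of care is the rank identity $\textup{rank}(\mathbf{A}) = 2\,\textup{rank}(\mathbf{S})$, because it is tempting to quote it without justification. Given that the statement immediately preceding the proposition already asserts $\eta = n - 2\,\textup{rank}(\mathbf{S})$ and the text says ``We have proved the following result'', the proposition is presented as a one-line corollary, and my proof would reflect that brevity.
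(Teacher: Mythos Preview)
Your proposal is correct and mirrors the paper's own argument exactly: the proposition is stated as an immediate corollary of the identity $\eta(G)=n-2\,\textup{rank}(\mathbf{S})$ derived from the block form~(\ref{EqBip}) and the rank--nullity theorem. The only minor addition you make is an explicit justification of $\textup{rank}(\mathbf{A})=2\,\textup{rank}(\mathbf{S})$, which the paper leaves implicit.
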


 In \cite{BevisRank95}, the result in Proposition \ref{PropBipParity} is obtained for  trees, a subclass of the bipartite graphs.  In particular, a bipartite non-singular graph has an even number of vertices.

To explore bipartite MCs it is convenient to consider first a singular bipartite graph of nullity 1. 
\begin{proposition}  \label{PropBipCVInd}
A singular  bipartite graph of nullity 1 admits a core--labelling.
\end {proposition}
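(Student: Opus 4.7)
The plan is to exploit the bipartite block structure of $\mathbf{A}$ together with the hypothesis $\eta(G)=1$ to force the core vertex set to lie entirely inside one partite class, which is automatically independent. Since ``admits a core--labelling'' amounts to having $CV$ independent, this is the only thing we need to establish.

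Concretely, I would first label $V_1$ before $V_2$ so that $\mathbf{A}$ has the form (\ref{EqBip}). Let $\mathbf{x}$ be a generator of $\mathrm{Ker}(\mathbf{A})$ and partition it conformally as $\mathbf{x}=(\mathbf{x}_1,\mathbf{x}_2)^\intercal$ with $\mathbf{x}_1$ indexed by $V_1$ and $\mathbf{x}_2$ by $V_2$. The equation $\mathbf{A}\mathbf{x}=\mathbf{0}$ then decouples into the two independent blocks
\begin{equation*}
\mathbf{S}\,\mathbf{x}_2=\mathbf{0}, \qquad \mathbf{S}^\intercal\mathbf{x}_1=\mathbf{0}.
\end{equation*}
The key observation is that each half of $\mathbf{x}$ survives on its own: the vectors $(\mathbf{x}_1,\mathbf{0})^\intercal$ and $(\mathbf{0},\mathbf{x}_2)^\intercal$ both lie in $\mathrm{Ker}(\mathbf{A})$. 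If both $\mathbf{x}_1$ and $\mathbf{x}_2$ were non--zero, these two vectors would be linearly independent and $\eta(G)\geq 2$, contradicting the hypothesis.

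Therefore exactly one of $\mathbf{x}_1,\mathbf{x}_2$ is zero; without loss of generality $\mathbf{x}_2=\mathbf{0}$. Since $\eta(G)=1$, every kernel vector is a scalar multiple of $\mathbf{x}$, so the set of core vertices, being the support of $\mathbf{x}$, is contained in $V_1$. As $V_1$ is an independent set in the bipartite graph $G$, so is $CV$. The vertex set therefore partitions as $V=CV\,\dot\cup\,N(CV)\,\dot\cup\,CFV_R$, and labelling the vertices of $CV$ first, then those of $N(CV)$, and finally those of $CFV_R$, yields the required core--labelling.

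There is essentially no obstacle here: the whole argument hinges on the single observation that in a bipartite kernel equation the two partite halves of any nullspace vector are independent solutions, so nullity one forces one half to vanish. The only thing to be careful about is to record that the support of $\mathbf{x}$ truly equals $CV$ (which uses $\eta(G)=1$ together with Definition \ref{core}), so that the containment $CV\subseteq V_1$ really follows.
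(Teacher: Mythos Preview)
Your proof is correct and follows essentially the same approach as the paper's own argument: both split a kernel vector conformally with the bipartite block structure of $\mathbf{A}$, observe that each half is itself a kernel vector, and use $\eta(G)=1$ to force one half to vanish, placing $CV$ inside a single partite class. If anything, your version is slightly more explicit in justifying why the support of the generator equals $CV$ and in writing out the decoupled equations $\mathbf{S}\mathbf{x}_2=\mathbf{0}$, $\mathbf{S}^\intercal\mathbf{x}_1=\mathbf{0}$.
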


\begin{proof}  Let $G({ V}_1, { V}_2, { E})$ be a singular bipartite graph with partite sets ${ V}_1$   and ${ V}_2.$
We show that $CV\subseteq { V}_1,$ without loss of generality.

Suppose   $CV\subseteq { V}_1 \cup  { V}_2.$  Then there exists   ${\bf x}=\left(\alpha_1, ..., \alpha_{|{ V}_1|}, \beta_1, ..., \beta_{|{ V}_2|}\right)^\intercal , $  $\ {\bf x}\not ={\bf 0},$   where not all the $\alpha_i$ are zero and not all the $\beta_j$  are zero.
Then  ${\bf A}\left(\alpha_1, ..., \alpha_{|{ V}_1|}, 0,...,0\right)^\intercal={\bf 0}$  and ${\bf A}\left( 0,...,0,  \beta_1, ..., \beta_{|{ V}_2|}\right)^\intercal={\bf 0},$ showing that  ${\bf A}$  has two linearly independent nullspace vectors. This contradicts that the nullity of a MC is 1.

Hence without loss of generality, $\beta _j=0,\ 1\leq j\leq |{ V}_2|,$  showing that the core vertices lie in ${ V}_1.$ Thus the  $CV$ of a bipartite MC  is necessarily an independent set, which is the condition for the existence of a core--labelling.
\end{proof}

\begin{theorem} \label{TheoBipNull1}
	Let $G$ be a bipartite graph, of nullity 1, on $n$ vertices with partite vertex sets $V_1$ and $V_2$. Then, \vspace{-5mm}
	\begin{enumerate}[\rm (i)]
		\item $n$ is odd
		\item For $|V_1| > |V_2|$, $|V_2| = \dfrac{n-1}{2}$ and $|V_1| = |V_2| + 1$
		\item $ CV \subseteq   V_1.$  
	\end{enumerate}
\end{theorem}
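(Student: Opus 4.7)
I would tackle the three parts in the order (i), (ii), (iii), since my argument for (iii) leans on the rank information extracted in (ii).

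Part (i) is immediate from Proposition \ref{PropBipParity}: the nullity $\eta(G)=1$ is odd, and by that proposition $\eta(G)$ and $n$ share parity, so $n$ is odd.

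For (ii) my plan is to exploit the block form (\ref{EqBip}) of the bipartite adjacency matrix together with the identity $\eta(G)=n-2\,\mathrm{rank}(\mathbf S)$ recorded immediately after it. Setting $\eta(G)=1$ yields $\mathrm{rank}(\mathbf S)=(n-1)/2$. Since $\mathbf S$ is $|V_1|\times|V_2|$, its rank is at most $\min(|V_1|,|V_2|)$, so under the hypothesis $|V_1|>|V_2|$ one obtains $|V_2|\ge (n-1)/2$. Combining with $|V_1|+|V_2|=n$ gives $|V_1|\le (n+1)/2$, while the strict integer inequality $|V_1|>|V_2|$ forces $|V_1|\ge |V_2|+1\ge (n+1)/2$; both bounds are therefore sharp, giving $|V_2|=(n-1)/2$ and $|V_1|=|V_2|+1$.

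Part (iii) then follows by combining (ii) with Proposition \ref{PropBipCVInd}. From (ii) we have $\mathrm{rank}(\mathbf S)=|V_2|$, so $\mathbf S$ has full column rank and $\ker(\mathbf S)=\{\mathbf 0\}$. Writing a generic kernel vector of $\mathbf A$ as $(\mathbf a,\mathbf b)^\intercal$ with $\mathbf S\mathbf b=\mathbf 0$ and $\mathbf S^\intercal\mathbf a=\mathbf 0$ then forces $\mathbf b=\mathbf 0$, so every nonzero entry of every kernel vector sits in $V_1$; that is, $CV\subseteq V_1$. Proposition \ref{PropBipCVInd} already guarantees that $CV$ lies inside one of the two partite sets, and the full column rank of $\mathbf S$ is what singles out $V_1$ as the correct one. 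The only step with any real content is the rank-plus-counting argument in (ii), and I do not foresee any genuine obstacle beyond careful bookkeeping, since all the heavy lifting is already done by Proposition \ref{PropBipParity} and the rank identity following (\ref{EqBip}).
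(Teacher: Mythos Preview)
Your argument is correct and, for (i) and (ii), proceeds exactly as the paper does via the rank identity $\eta(G)=n-2\,\mathrm{rank}(\mathbf S)$. For (iii) the paper simply cites the proof of Proposition~\ref{PropBipCVInd}, where the inclusion $CV\subseteq V_1$ is asserted ``without loss of generality''; your use of the full column rank of $\mathbf S$ to force $\mathbf b=\mathbf 0$ in every kernel vector is a slight sharpening, since it actually pins down the larger partite set $V_1$ as the one containing $CV$, rather than relying on a labelling convention.
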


\begin{proof} Let the adjacency matrix   of $G$  be as in (\ref{EqBip}).
	
	\begin{enumerate}[(i)]
		\item Since $\text{rank}(\mathbf{A}) = 2\, \text{rank}(\mathbf{S})$ and $\eta(G) = 1$, then  $n = 2\, \text{rank}(\mathbf{S}) + 1$, which is odd.
		
		\item Without loss of generality, let $|V_1| > |V_2|$. Then $\text{rank}(\mathbf{S})\leq |V_2|$.
Hence 		$n - 1 = \text{rank}(\mathbf{A}) \leq 2|V_2|.$ 
Thus $ |V_1| + |V_2| - 1 \leq 2|V_2|$  and$|V_1| = |V_2| + 1$. Since $n = |V_1| + |V_2|$, it follows that $|V_2| = \dfrac{n - 1}{2}$.

\item 
The proof of Proposition \ref{PropBipCVInd}  shows that $CV\subseteq { V}_1.$ 
\end{enumerate}
\vspace{-8mm}
\end{proof}
A MC has nullity equal to 1. For a  bipartite MC, with partite sets $V_1$  and  $V_2,$ and   $|V_1|>|V_2|,$  we have  
  $|V_1|=|V_2|+1.$ 
\begin{corollary}
Let $G$ be a bipartite MC with vertex partite sets $V_1$  and $V_2,$   where   $|V_1|>|V_2|.$ 
		Then  the set  $CV$  of core vertices is $V_1$ and the  set $CFV$  ( that is $\mathcal{P}$)  is $V_2$.
\end{corollary}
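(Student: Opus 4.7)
The plan is to combine the containment $CV\subseteq V_1$ from Theorem \ref{TheoBipNull1}(iii) with the defining cardinality condition (iii) of a minimal configuration, and then do a short cardinality count using the vertex balance established in Theorem \ref{TheoBipNull1}(ii).

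First I would observe that since $CV\subseteq V_1$ and $V_1$ is an independent set in the bipartition, the induced core $F=G[CV]$ contains no edges. Hence $F$ is a graph on $|CV|$ isolated vertices and its adjacency matrix is the zero matrix, so $\eta(F)=|CV|$. Plugging this into condition (iii) of Definition \ref{Defmc} yields
\[
|\mathcal{P}|+1=\eta(F)=|CV|.
\]

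Next I would use that $CV$ and $\mathcal{P}$ partition the vertex set, so $|CV|+|\mathcal{P}|=n$, together with the above, to deduce $|CV|=\tfrac{n+1}{2}$ and $|\mathcal{P}|=\tfrac{n-1}{2}$. By Theorem \ref{TheoBipNull1}(ii), under the hypothesis $|V_1|>|V_2|$ we have $|V_1|=\tfrac{n+1}{2}$ and $|V_2|=\tfrac{n-1}{2}$. Combining $CV\subseteq V_1$ with $|CV|=|V_1|$ forces $CV=V_1$, and then $\mathcal{P}=V\setminus CV=V_2$.

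There is no real obstacle here beyond making sure the bookkeeping is clean; the whole argument is a short combination of the independence of $V_1$ (which collapses $\eta(F)$ to $|CV|$) with the MC cardinality identity and the vertex-count formulas from Theorem \ref{TheoBipNull1}. The only subtlety worth flagging is that one must invoke that $V_1$ is independent in the bipartite sense to conclude $\eta(F)=|CV|$; this is exactly what lets the definition of MC pin down $|CV|$ uniquely, and without it the bound would only be $\eta(F)\le|CV|$.
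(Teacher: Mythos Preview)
Your proof is correct but takes a different route from the paper. The paper argues structurally: from $CV\subseteq V_1$ and the fact that every vertex of $\mathcal{P}$ can only be adjacent to vertices of $CV$ (since $\mathcal{P}$ is independent by axiom~(ii) of Definition~\ref{Defmc}), any vertex in $\mathcal{P}\cap V_1$ would be isolated, contradicting connectedness of an MC; hence $\mathcal{P}\subseteq V_2$, and the two containments $CV\subseteq V_1$, $\mathcal{P}\subseteq V_2$ together with $CV\,\dot\cup\,\mathcal{P}=V_1\,\dot\cup\,V_2$ force equality. Your argument instead invokes axiom~(iii) of Definition~\ref{Defmc}: since $CV\subseteq V_1$ forces $F=G[CV]$ to be edgeless, $\eta(F)=|CV|$, and then $|\mathcal{P}|+1=|CV|$ combined with the cardinality identities of Theorem~\ref{TheoBipNull1}(ii) pins down $|CV|=|V_1|$, whence $CV=V_1$. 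The paper's version uses connectedness and axiom~(ii) but never touches axiom~(iii) or the arithmetic of Theorem~\ref{TheoBipNull1}(ii); yours is a clean cardinality count that makes axiom~(iii) do the work. Both are short and legitimate; yours has the minor advantage of showing explicitly that $|CV|=\frac{n+1}{2}$, which is a pleasant by-product.
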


\begin{proof}
By Theorem \ref{TheoBipNull1}(iii),  $CV\subseteq { V}_1.$ 
  A minimal configuration is connected and  $V_1$ is   
   an independent set in a bipartite MC. Note that  $\mathcal{P}$ is an  independent set.  Thus the only neighbouring vertices of a vertex in   $\mathcal{P}$ are in $CV.$
Since  $ \mathcal{P}= V\backslash CV,$ then  $ \mathcal{P}\cap V_1=\emptyset. $ 
Thus  $\mathcal{P}\subseteq V_2$. Moreover $CV=V_1$  and $\mathcal{P}=V_2.$
\end{proof}

Another characterization of a bipartite MC focuses on the removal of extra vertices and edges,  from  a singular bipartite graph of nullity 1, producing a slim graph (Definition \ref{DefSlim}, page \pageref{DefSlim}).

\begin{theorem}  \label{TheoBipMCharzn}
A graph  $G(V_1,V_2,E),$  \   $|V_1|>|V_2|,$ is a bipartite MC if and only if it is a slim bipartite graph of nullity 1 with $CV=V_1$.
\end{theorem}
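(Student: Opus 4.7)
The plan is to prove the biconditional by directly unpacking Definitions \ref{Defmc} and \ref{DefSlim} on each side, leaning on Theorem \ref{TheoBipNull1} and the corollary that immediately precedes it (which already identifies $CV$ with $V_1$ in a bipartite MC).

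For the forward direction, I would assume $G(V_1,V_2,E)$ is a bipartite MC with $|V_1|>|V_2|$. By Definition \ref{Defmc}(i), $\eta(G)=1$, and the preceding corollary gives $CV=V_1$ (with $\mathcal{P}=V_2$). It then remains to check the three clauses of slimness from Definition \ref{DefSlim}: connectedness, independence of $CV$, and $CFV = N(CV)$. Connectedness of $G$ was established in the paragraph following Definition \ref{Defmc}. Independence of $CV=V_1$ is free, since $V_1$ is a partite set of a bipartite graph. For $CFV=N(CV)$, i.e. $CFV_R=\emptyset$, I would argue that any vertex $v \in V_2 = CFV$ must have at least one neighbour by connectedness, and every neighbour of $v$ lies in $V_1=CV$ by bipartiteness; hence every $v\in CFV$ is in $N(CV)$.

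For the reverse direction, I would assume $G$ is a slim bipartite graph of nullity $1$ with $CV=V_1$ and verify the three clauses of Definition \ref{Defmc}. Clause (i) is given. For clause (ii), the periphery is $\mathcal{P}=V\setminus CV=V_2$, which induces an edgeless graph because $V_2$ is a partite set of a bipartite graph. For clause (iii), the core is $F=G[CV]=G[V_1]$; since $V_1$ is independent, $F\cong\overline{K_{|V_1|}}$ and $\eta(F)=|V_1|$. By Theorem \ref{TheoBipNull1}(ii), the hypotheses $\eta(G)=1$ and $|V_1|>|V_2|$ force $|V_1|=|V_2|+1=|\mathcal{P}|+1$, giving $|\mathcal{P}|+1=\eta(F)$ as required.

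I do not expect a genuine obstacle: the proof is essentially a bookkeeping exercise of matching the two sets of defining conditions. The one step that requires minor care is the verification $CFV_R=\emptyset$ in the forward direction, where both connectedness of the MC and the bipartite structure have to be used together to conclude that every periphery vertex is adjacent to a core vertex.
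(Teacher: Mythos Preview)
Your proposal is correct and follows essentially the same approach as the paper: both directions proceed by directly verifying the defining conditions (Definition~\ref{Defmc} and Definition~\ref{DefSlim}) using the preceding corollary and Theorem~\ref{TheoBipNull1}. Your reverse direction is in fact slightly cleaner than the paper's, since you compute $\eta(F)=|V_1|$ immediately from $F\cong\overline{K_{|V_1|}}$, whereas the paper detours through a nullity-increment argument upon removing the vertices of $V_2$ one at a time; one small wording slip is that the corollary you invoke follows Theorem~\ref{TheoBipNull1} rather than preceding it.
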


\begin{proof}
Let $G(V_1,V_2,E)$ be a  bipartite MC, $|V_1|>|V_2|$. Then it has nullity 1 and $|V_2|=|V_1|-1$. The set $V_1$  is $CV$ and  
$V_2$  is $CFV={\mathcal P}.$  Thus it has no $CFV_R$  and is therefore a slim graph of nullity 1.

Conversely, let $G(V_1,V_2,E)$ be a slim bipartite graph of nullity 1, with $CV=V_1$.  Then $V_2=CFV $  and by Theorem \ref{TheoBipNull1} (ii), $|V_2|=|V_1|-1.$  
Removal of $V_2$ leaves the core $F,$  induced by $CV,$  with nullity $|CV| $  increasing the nullity from 1 to $|V_1|.$ But then the nullity increases by one with the  removal  of each vertex in $V_2.$ 
 Thus $\mathcal{P}=V_2$ and  is an independent set. Also $\eta (F)= |V_1|.$
 Moreover $|\mathcal{P}|=|V_2|=\eta(F)-1.$  
 Hence  $G$ is a  bipartite MC.
  \end{proof}

It is worth mentioning that stipulating that a MC is bipartite can do away with the third axiom of a general MC.

\section{Nullspace Vertex Partition in Trees}  \label{SecTrees}
Trees are the most commonly studied class of graphs \cite{SciFioGutTreesMaxNull05}. In this section we explore MC trees and singular trees in general. First we need a result on the number of core vertices adjacent to any vertex of a singular graph on more than 1 vertex.
\begin{lemma} \label{Lem2CV}
	A vertex of a singular graph cannot be adjacent to exactly one  core vertex.
\end{lemma}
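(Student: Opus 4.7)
The plan is to argue by contradiction, using the Zero Sum Rule (the row equations of $\mathbf{A}\mathbf{x} = \mathbf{0}$) applied at the offending vertex. Suppose some vertex $v$ of the singular graph $G$ is adjacent to exactly one core vertex $u$, so that every other neighbour of $v$ lies in $CFV$. I would then pick an arbitrary kernel vector $\mathbf{x}$ of $\mathbf{A}$ and read off the equation indexed by $v$:
\[
0 = (\mathbf{A}\mathbf{x})_v = \sum_{w \in N(v)} x_w.
\]

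The key observation is the asymmetry between the two types of vertices: by Definition \ref{core}, every kernel vector $\mathbf{x}$ satisfies $x_w = 0$ for each $w \in CFV$, whereas a core vertex only needs to be non-zero in \emph{some} kernel vector. Applying this, every term in the sum above corresponding to a neighbour of $v$ other than $u$ vanishes, and the equation collapses to $x_u = 0$. Since the kernel vector $\mathbf{x}$ was arbitrary, this forces $x_u = 0$ for \emph{every} kernel vector of $\mathbf{A}$, which by Definition \ref{core} says $u$ is a core--forbidden vertex. This contradicts the hypothesis that $u$ is a core vertex, completing the proof.

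There is no real obstacle here; the only point that requires care is keeping track of the quantifier asymmetry in the definition of $CV$ versus $CFV$ and applying it uniformly across all kernel vectors, so that the single row equation at $v$ yields the conclusion that $u$ must be zero in every kernel vector rather than merely in a chosen one.
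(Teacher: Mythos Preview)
Your proof is correct and follows essentially the same route as the paper's: both arguments read off the $v$th row of $\mathbf{A}\mathbf{x}=\mathbf{0}$, namely $\sum_{w\in N(v)} x_w = 0$, and use that entries at core--forbidden vertices vanish in every kernel vector. The paper phrases it directly (choose a kernel vector with $x_u\neq 0$ and conclude another neighbour must also be nonzero, hence a core vertex), while you phrase it by contradiction (all kernel vectors would force $x_u=0$); your handling of the quantifiers is in fact a bit more explicit, but the substance is the same.
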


\begin{proof}
A graph is singular if there exists ${\bf x}\in {\mathbb R}^n,\ {\bf x}\not = {\bf 0},$  such that ${\bf Ax}={\bf 0}.$ 
 Let $v\in V(G).$  The $v$th row of ${\bf Ax}={\bf 0}$  can be written as $\sum_{i\sim v } x_i=0.$ 
  The neighbours of $v$ may be all $cfv$. If not, then  there exists  $w\in CV$  such that  $w\sim v$  and $x_w\not =0.$ But then there exists at least one other $cv\  w',\  w'\sim v$  with $x_{w'}\not =0$ to satisfy $\sum_{i\sim v } x_i=0.$ 
\end{proof}

As a result of Lemma \ref{Lem2CV}, if 2 core vertices are adjacent then an infinite path is a subgraph of a finite tree, since a tree has no cycles. This contradiction proves  the following  result
\begin{proposition}  \cite{NeumaierEssential82, JAUME2018836} \label{PropTreeCVindep}
	Let $T$ be a singular tree. Then $T$ has independent core vertices.
\end{proposition}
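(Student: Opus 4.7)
The plan is to derive a contradiction from Lemma \ref{Lem2CV} using the absence of cycles in $T$. Assume for contradiction that $T$ has two adjacent core vertices $u_0$ and $u_1$. Since $u_0 \in CV$ is a neighbour of $u_1$, the vertex $u_1$ is adjacent to the core vertex $u_0$; Lemma \ref{Lem2CV} then forces $u_1$ to have at least one more core neighbour $u_2 \neq u_0$. Symmetrically, $u_0$ has a second core neighbour $u_{-1} \neq u_1$. The essential step is to iterate this construction: given a path $u_{-k} u_{-k+1} \cdots u_{\ell-1} u_\ell$ of core vertices realised inside $T$, apply Lemma \ref{Lem2CV} at $u_\ell$ to obtain a new core neighbour $u_{\ell+1}$ distinct from $u_{\ell-1}$, and likewise extend at $u_{-k}$.

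The next key step is to observe that each newly produced vertex is genuinely new. Concretely, any $u_{\ell+1}$ coinciding with some earlier $u_j$ (for $j \leq \ell - 1$) would close the $T$-path $u_j u_{j+1} \cdots u_\ell u_{\ell+1}$ into a cycle, contradicting that $T$ is acyclic. Hence the extension procedure yields an infinite sequence of pairwise distinct vertices of $T$, contradicting the finiteness of $V(T)$. Therefore no two core vertices of $T$ can be adjacent, i.e.\ $CV$ is independent.

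The one subtlety to be careful about is the case in which the initial chosen neighbour happens to lie at the "end" of a branch of the tree, where the only way to extend is to re-use an old vertex; the tree hypothesis is exactly what prevents this, and it is the only place where the fact that $T$ is a tree (rather than an arbitrary singular graph) is used. Everything else reduces to repeated application of Lemma \ref{Lem2CV}, so the proof is short once the no-repetition observation is stated cleanly.
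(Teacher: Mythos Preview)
Your proof is correct and follows essentially the same argument as the paper: assuming two adjacent core vertices, repeated application of Lemma~\ref{Lem2CV} produces an ever-extending path of core vertices, and acyclicity forces all of these to be distinct, contradicting finiteness. The paper states this in a single sentence, while you have spelled out the iteration and the no-repetition step explicitly; extending in one direction already suffices, so the two-sided extension is not needed.
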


%
%

For a tree, the combinatorial properties  of the subgraph induced by $CFV_R$ will prove useful in Theorem \ref{TheoSubdiv}.

\begin{theorem} \label{TheoPM}
For a core--labelling of a singular tree $T,$   the   subgraph induced by $CFV_R$  has a perfect matching. 
\end{theorem}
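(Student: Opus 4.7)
The plan is to deduce this from two results already available in the excerpt: Theorem \ref{TheoCFVrInv} (which says that $T[CFV_R]$ is non-singular for any core-labelled graph) and the well-known identity $\eta(T) = n - 2t$ for trees, suitably extended to forests.

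First I would observe that $T[CFV_R]$ is a forest. This is immediate because $T$ is a tree (so acyclic), and induced subgraphs of acyclic graphs are acyclic; since $T$ has independent core vertices by Proposition \ref{PropTreeCVindep}, it admits a core-labelling and the partition $V = CV \,\dot\cup\, N(CV) \,\dot\cup\, CFV_R$ is well defined, making $T[CFV_R]$ a meaningful induced subforest.

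Next I would apply Theorem \ref{TheoCFVrInv} to conclude that $T[CFV_R]$ is non-singular, i.e.\ $\eta\bigl(T[CFV_R]\bigr)=0$. The identity $\eta(T)=n-2t$ for trees extends additively to forests: for a forest $F$ with components $T_1,\dots,T_k$, both nullity and matching number are additive across components, so
\[
\eta(F) \;=\; \sum_{i=1}^{k}\eta(T_i) \;=\; \sum_{i=1}^{k}\bigl(n(T_i)-2t(T_i)\bigr) \;=\; n(F)-2t(F).
\]
Applying this to the non-singular forest $T[CFV_R]$ gives $n\bigl(T[CFV_R]\bigr) = 2\,t\bigl(T[CFV_R]\bigr)$, so a maximum matching in $T[CFV_R]$ covers every vertex; equivalently, $T[CFV_R]$ has a perfect matching, as required.

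I do not anticipate a genuine obstacle: the substantive content is packaged in Theorem \ref{TheoCFVrInv}, and the remainder is bookkeeping with the classical nullity--matching identity. The only thing to be careful about is the case $CFV_R=\emptyset$, in which the statement is vacuous (the empty matching is perfect), and the case where $T[CFV_R]$ is disconnected, which is handled cleanly by the additivity of $\eta$ and $t$ over components of a forest.
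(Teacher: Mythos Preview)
Your proposal is correct and follows essentially the same route as the paper: invoke Theorem~\ref{TheoCFVrInv} to get nullity zero for the induced subgraph on $CFV_R$, then use the identity $\eta = n - 2t$ to conclude $n = 2t$ and hence a perfect matching. You are in fact more careful than the paper, which writes the identity as ``$\eta(T)=n-2t=0$'' without making explicit that it is being applied to the induced forest $T[CFV_R]$ rather than to $T$; your explicit extension to forests by additivity over components and your treatment of the edge cases are welcome clarifications.
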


\begin{proof}
In  Proposition \ref{TheoCFVrInv}, we show that $M$ as in (\ref{EqCoreL})  is invertible. The nullity $\eta(T)=n-2t=0.$  Hence the subgraph induced by $CFV_R$ has a perfect matching (a one--factor). 
\end{proof}

We shall now use the concept of subdivision for the proof of the characterization of a MC tree.

\begin{definition}
A {\it subdivision} $S$  of a connected graph $G$ on $n$ vertices and $m$ edges is obtained from $G$ by inserting a vertex of degree 2 in each edge. Thus $S$ has $n+m$ vertices and  $2m$  edges. 	
\end{definition}

\begin{lemma} \label{LemSubdv}  Let ${\bf B}$ be the vertex--edge incidence matrix of   a connected graph $G.$
The characteristic polynomial of the subdivision $S$ of a connected graph $G$ is 
$\phi(S,\lambda)= \lambda ^{n-m}\det(\lambda ^2{\bf I}-{\bf B}^{\intercal}{\bf B}).$
\end{lemma}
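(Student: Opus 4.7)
The plan is to compute $\det(\lambda I_{n+m}-A(S))$ directly by identifying $A(S)$ as a bordered block matrix and applying the Schur complement. First I would order the vertex set of $S$ by listing the $n$ original vertices of $G$ first and then the $m$ subdivision vertices $w_e$, one for each $e\in E(G)$. Because each $w_e$ corresponding to an edge $e=\{u,v\}$ is adjacent in $S$ to exactly $u$ and $v$, the adjacency matrix takes the block form
\[
A(S)=\begin{pmatrix}\mathbf{0}_{n\times n} & \mathbf{B}\\ \mathbf{B}^{\intercal} & \mathbf{0}_{m\times m}\end{pmatrix},
\]
where $\mathbf{B}$ is precisely the $n\times m$ vertex–edge incidence matrix of $G$ (the $(i,e)$ entry is $1$ iff vertex $i$ is an endpoint of edge $e$). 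This identification is the essential geometric content of the lemma; everything that follows is linear algebra.

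Next I would evaluate $\phi(S,\lambda)=\det\!\begin{pmatrix}\lambda I_n & -\mathbf{B}\\ -\mathbf{B}^{\intercal} & \lambda I_m\end{pmatrix}$ via the Schur-complement identity applied to the invertible block $\lambda I_n$ (regarding $\lambda$ as an indeterminate over a rational-function field, so $\lambda I_n$ is invertible). This yields
\[
\phi(S,\lambda)=\det(\lambda I_n)\,\det\!\bigl(\lambda I_m-\mathbf{B}^{\intercal}(\lambda I_n)^{-1}\mathbf{B}\bigr)=\lambda^{n}\det\!\bigl(\lambda I_m-\lambda^{-1}\mathbf{B}^{\intercal}\mathbf{B}\bigr)=\lambda^{\,n-m}\det(\lambda^{2}I-\mathbf{B}^{\intercal}\mathbf{B}),
\]
which is exactly the claimed formula.

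The only point requiring care, which I expect to be the main obstacle at the exposition level (it is not a deep obstacle), is justifying the factor $\lambda^{n-m}$ when $m>n$, since then the symbol looks like a negative power while the left-hand side is genuinely a polynomial. I would resolve this by noting that $\mathbf{B}^{\intercal}\mathbf{B}$ has rank at most $\min(n,m)$, so when $m>n$ the polynomial $\det(\lambda^{2}I_m-\mathbf{B}^{\intercal}\mathbf{B})$ is divisible by $\lambda^{2(m-n)}$; multiplying by $\lambda^{n-m}$ produces an honest polynomial. Equivalently, one can work in the field of rational functions $\mathbb{R}(\lambda)$ throughout and observe that the identity between the two sides, once established there, persists in the polynomial ring because $\phi(S,\lambda)\in\mathbb{R}[\lambda]$. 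With this interpretation fixed, the derivation above is complete.
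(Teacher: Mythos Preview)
Your proof is correct and follows exactly the same approach as the paper: write $A(S)=\left(\begin{smallmatrix}\mathbf{0}&\mathbf{B}\\\mathbf{B}^{\intercal}&\mathbf{0}\end{smallmatrix}\right)$ and apply the Schur complement with respect to the $\lambda I_n$ block. Your version is in fact more careful than the paper's, which omits the justification of the block form and the discussion of the case $m>n$.
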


\begin{proof}
The adjacency matrix of $S$  is 
\begin{equation} \label{MatrixSubDiv}
 {\bf A}(S) =\left(
\begin{array}{c|c}
	{\bf 0}_n&{\bf B}\\ \hline
	{\bf B}^{\intercal}& {\bf 0}_m
\end{array}
\right).
\end{equation}

Expanding using  Schur's complement, $\phi(S,\lambda)= \lambda ^{n}\det(\lambda {\bf I}-{\bf B}^{\intercal}( \lambda {\bf I})^{-1}{\bf B} )
 =  \lambda ^{n-m}\det(\lambda ^2{\bf I}-{\bf B}^{\intercal}{\bf B}).$
\end{proof}
 \begin{corollary}  \label{CorTreeBrank}
 For a tree $T,$  the incidence matrix ${\bf B}$  has full rank.
   \end{corollary}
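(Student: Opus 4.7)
The plan is to deduce the full rank of $\mathbf{B}$ from the preceding lemma by computing $\eta(S)$ for the subdivision $S$ of the tree $T$ and reading off the multiplicity of $0$ in the factorization of $\phi(S,\lambda)$ that the lemma provides.

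First I would set up the bookkeeping. If $T$ has $n$ vertices and $m = n-1$ edges, then the subdivision $S$ is itself a tree (subdividing edges preserves the tree property) with $n + m = 2n - 1$ vertices. By the matching formula for trees cited in Section~\ref{SecPend}, $\eta(S) = |V(S)| - 2 t(S)$, so the whole argument reduces to computing the matching number $t(S)$.

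Next I would show that $t(S) = n - 1$. For the lower bound, root $T$ at an arbitrary vertex $v_0$; every other vertex $c$ of $T$ has a unique parent edge $e$ which in $S$ is replaced by two edges $p\, w_e$ and $w_e\, c$, where $w_e$ is the subdivision vertex. Matching $w_e$ with the child $c$ for every non-root $c$ produces a matching in $S$ of size $n-1$ that saturates every vertex except $v_0$. The upper bound $t(S) \leq \lfloor (2n-1)/2 \rfloor = n-1$ is immediate from $|V(S)| = 2n-1$. Hence $\eta(S) = (2n-1) - 2(n-1) = 1$.

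Finally I would invoke Lemma~\ref{LemSubdv}. Since $n - m = 1$ for a tree, the lemma gives
\[
\phi(S,\lambda) \;=\; \lambda\, \det(\lambda^2 \mathbf{I} - \mathbf{B}^{\intercal}\mathbf{B}).
\]
The multiplicity of $0$ as a root of the left-hand side equals $\eta(S) = 1$, while the explicit factor $\lambda$ already accounts for all of it. Therefore $\det(\lambda^2 \mathbf{I} - \mathbf{B}^{\intercal}\mathbf{B})$ is nonzero at $\lambda = 0$, i.e.\ $\mathbf{B}^{\intercal}\mathbf{B}$ is invertible. Since $\mathrm{rank}(\mathbf{B}) = \mathrm{rank}(\mathbf{B}^{\intercal}\mathbf{B}) = m$ and $\mathbf{B}$ is $n \times m$, the incidence matrix $\mathbf{B}$ has full column rank.

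The main obstacle, if any, is verifying $t(S) = n - 1$; this is a brief matching construction that I expect to be the only genuine combinatorial step. Everything else is a direct substitution into Lemma~\ref{LemSubdv} together with the standard identity $\mathrm{rank}(\mathbf{B}) = \mathrm{rank}(\mathbf{B}^{\intercal}\mathbf{B})$.
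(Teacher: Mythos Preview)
Your proof is correct, but it runs in the opposite direction from the paper's. The paper does not actually use Lemma~\ref{LemSubdv} here at all: it argues directly that $\mathbf{B}^{\intercal}\boldsymbol{\alpha}=\mathbf{0}$ forces $\alpha_u=-\alpha_w$ along every edge, so for a connected bipartite graph $\dim\ker(\mathbf{B}^{\intercal})=1$, whence $\mathrm{rank}(\mathbf{B})=n-1=m$. It then feeds this into Lemma~\ref{LemSubdv} to obtain Corollary~\ref{CorSub}, namely $\eta(S)=1$.

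You instead establish $\eta(S)=1$ first, via the tree matching formula $\eta=n-2t$ and an explicit near-perfect matching of $S$, and then read the rank of $\mathbf{B}$ off the factorization in Lemma~\ref{LemSubdv}. This has the virtue of making the statement a genuine corollary of the preceding lemma (which, despite the label, the paper's own proof does not), at the cost of importing the matching identity from Section~\ref{SecPend}. The paper's route is the more elementary and self-contained of the two; yours is a legitimate alternative that effectively swaps the logical order of Corollaries~\ref{CorTreeBrank} and~\ref{CorSub}.
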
  
\begin{proof}
 Consider ${\bf B}^{\intercal} \left(\begin{array}{c} \alpha _1\\
 \alpha _2\\
 \vdots\\
 \alpha _n \end{array}\right)= \left(\begin{array}{c} 0\\
 0\\
 \vdots\\
 0 \end{array}\right).$  Since there are only 2 non--zero entries in each column of ${\bf B},$ 
\ $\alpha _u=-\alpha _w $
for edge $\{u,w\}.$ For a   connected graph, it follows that the nullspace of ${\bf B}^{\intercal} $ has dimension 1 for a bipartite graph and 0 otherwise.   The tree $T$ is bipartite and $m=n-1.$ Hence the rank of ${\bf B} $ which is the same as the rank of ${\bf B}^{\intercal} $ 
  is $m.$ 
\end{proof}

\begin{corollary}  \label{CorSub}
The subdivision of a tree is singular with nullity 1.
\end{corollary}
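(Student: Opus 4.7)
The plan is to specialise Lemma \ref{LemSubdv} to the case where the connected graph is a tree and then use Corollary \ref{CorTreeBrank} to pin down the multiplicity of the eigenvalue $0$.

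First I would recall that a tree $T$ on $n$ vertices has exactly $m = n-1$ edges. Substituting this into the factorisation
\[
\phi(S,\lambda) \;=\; \lambda^{n-m}\det(\lambda^{2}\mathbf{I}-\mathbf{B}^\intercal\mathbf{B})
\]
from Lemma \ref{LemSubdv} yields $\phi(S,\lambda) = \lambda \cdot \det(\lambda^{2}\mathbf{I}-\mathbf{B}^\intercal\mathbf{B})$, so the subdivision $S$ has at least one zero eigenvalue coming from the leading $\lambda$.

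The next step is to show that no additional zero eigenvalues arise from the second factor. Setting $\lambda = 0$ gives the value $\det(-\mathbf{B}^\intercal\mathbf{B}) = (-1)^{m}\det(\mathbf{B}^\intercal\mathbf{B})$. Here $\mathbf{B}$ is the $n \times m$ vertex--edge incidence matrix of $T$, and by Corollary \ref{CorTreeBrank} it has full rank $m = n-1$. Consequently $\mathbf{B}^\intercal\mathbf{B}$ is an $m \times m$ matrix of rank $m$, hence invertible, and $\det(-\mathbf{B}^\intercal\mathbf{B})\neq 0$. Therefore $\lambda$ divides $\phi(S,\lambda)$ exactly once, which means the algebraic multiplicity of the eigenvalue $0$ is $1$, i.e.\ $\eta(S) = 1$.

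I do not see a genuine obstacle here, since all the heavy lifting has already been done: Lemma \ref{LemSubdv} delivers the factored characteristic polynomial, and Corollary \ref{CorTreeBrank} supplies the crucial rank fact that makes the second factor non-vanishing at $\lambda=0$. The only mild subtlety is to notice that $\mathbf{B}$ being $n\times m$ with $m = n-1$ means ``full rank'' must be interpreted as rank $m$ (not rank $n$), so that $\mathbf{B}^\intercal\mathbf{B}$ is a square non-singular matrix of the correct size; once this is observed the proof is a one-line substitution.
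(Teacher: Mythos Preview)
Your proof is correct and follows essentially the same approach as the paper: both invoke Lemma~\ref{LemSubdv} together with $m=n-1$ to obtain $\phi(S,\lambda)=\lambda\det(\lambda^{2}\mathbf{I}-\mathbf{B}^\intercal\mathbf{B})$, and then use Corollary~\ref{CorTreeBrank} to conclude that $\mathbf{B}^\intercal\mathbf{B}$ is non-singular. Your write-up simply makes explicit the steps that the paper compresses into a single sentence.
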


\begin{proof}
This follows immediately from Lemma \ref{LemSubdv}  since from Corollary \ref{CorTreeBrank},  the nullspace  of ${\bf B}^{\intercal}{\bf B}$ is  $\{\bf 0\}$ for a tree with  $m=n-1.$
\end{proof}

In \cite{LAMAMinConfigTreesGutmanSciriha_2006}, a characterization of MC trees is presented.  Here we give a different proof by using  Corollary \ref{CorSub}.

\begin{theorem}  \label{TheoSubdiv} \cite{LAMAMinConfigTreesGutmanSciriha_2006}
	A tree  is a minimal configuration if and only if it is a subdivision of another tree.
\end{theorem}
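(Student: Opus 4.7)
The plan is to prove both directions by leveraging Corollary \ref{CorSub} together with the explicit description of the nullspace of a subdivision coming from the proof of Lemma \ref{LemSubdv}.

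For the ``if'' direction, let $T = S(T')$ be the subdivision of a tree $T'$ on $n'$ vertices. By Corollary \ref{CorSub}, $\eta(T) = 1$, giving axiom (i) of Definition \ref{Defmc}. Using the block form (\ref{MatrixSubDiv}) and the fact that ${\bf B}$ has full column rank (Corollary \ref{CorTreeBrank}), every kernel vector of ${\bf A}(T)$ has the shape $({\bf x}, {\bf 0})^{\intercal}$, where ${\bf B}^{\intercal}{\bf x} = {\bf 0}$ forces $x_u = -x_w$ along every edge of $T'$; bipartiteness and connectedness of $T'$ then make ${\bf x}$ nonzero on every original vertex. Thus $CV$ is precisely the set of original vertices and $\mathcal{P}$ is precisely the set of inserted vertices. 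The inserted vertices form an independent set (both of their neighbors lie in $CV$), verifying axiom (ii). The core $F = G[CV]$ is edgeless, so $\eta(F) = n'$, and $|\mathcal{P}| + 1 = (n' - 1) + 1 = \eta(F)$, giving axiom (iii).

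For the ``only if'' direction, let $T$ be a MC tree; the case $|V| = 1$ is immediate (take $T' = K_1$), so assume $|V| \geq 3$. By Proposition \ref{PropTreeCVindep}, $CV$ is independent, and by Definition \ref{Defmc}(ii), $\mathcal{P}$ is independent, so $T$ is bipartite with parts $CV$ and $\mathcal{P}$. The core $F = G[CV]$ is then edgeless with $\eta(F) = |CV|$, so axiom (iii) yields $|\mathcal{P}| = |CV| - 1$ and hence $|V| = 2|\mathcal{P}| + 1$ and $|E| = 2|\mathcal{P}|$. Since every edge has exactly one endpoint in $\mathcal{P}$, summing degrees over the part $\mathcal{P}$ gives $\sum_{p \in \mathcal{P}} \rho(p) = |E| = 2|\mathcal{P}|$. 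On the other hand, Lemma \ref{Lem2CV}, together with the independence of $\mathcal{P}$ and connectedness of $T$, forces every $p \in \mathcal{P}$ to have $\rho(p) \geq 2$. The two facts together pin down $\rho(p) = 2$ for every $p \in \mathcal{P}$.

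I then construct $T'$ as the graph on vertex set $CV$ with an edge joining the two $CV$-neighbors of each $p \in \mathcal{P}$. This $T'$ has $|CV|$ vertices and $|CV| - 1$ edges, and is connected because any $CV$-to-$CV$ path in $T$ alternates through degree-$2$ peripheral vertices that contract to $T'$-edges. Hence $T'$ is a tree and $T = S(T')$. The crucial step is pinning $\rho(p) = 2$ in the converse: Lemma \ref{Lem2CV} rules out pendant peripheral vertices, while the tight degree count forbids any peripheral vertex of degree at least $3$, so that the contraction yielding $T'$ is unambiguous.
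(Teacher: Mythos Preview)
Your proof is correct. The ``only if'' direction matches the paper's argument almost exactly: both establish that $CV$ and $\mathcal{P}$ are the bipartition, count $|E| = 2|\mathcal{P}|$, rule out peripheral leaves, and use the degree sum to force $\rho(p)=2$. Your invocation of Lemma~\ref{Lem2CV} is the same idea the paper uses implicitly (a pendant $p\in\mathcal{P}$ would have a unique neighbour forced to be a $cfv$ by the zero-sum rule), and you are a little more careful in spelling out why the contracted graph $T'$ is a tree.

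The ``if'' direction, however, takes a genuinely different route. The paper argues indirectly: it uses Theorem~\ref{TheoBipNull1}(iii) to get $CV\subseteq V_1$, then a perfect-matching argument on $S-w$ to show every original vertex is a $cv$ (hence $V_1\subseteq CV$), and finally appeals to Theorem~\ref{TheoBipMCharzn} to certify the MC axioms. You instead read off the kernel directly from the block form~(\ref{MatrixSubDiv}): full column rank of ${\bf B}$ kills the inserted-vertex coordinates, and ${\bf B}^{\intercal}{\bf x}=\mathbf{0}$ with $T'$ connected and bipartite forces ${\bf x}$ to be a nowhere-zero $\pm 1$ vector on the original vertices. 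This identifies $CV$ and $\mathcal{P}$ in one stroke and lets you verify axioms (i)--(iii) by inspection. Your argument is shorter and more self-contained, bypassing the slim-graph machinery; the paper's version has the advantage of tying the result back into the theory of bipartite MCs developed in Section~\ref{SecBip}.
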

\begin{proof}
Let $T'$  be a MC with $|CV|=n$   and $|{\mathcal P}|=|N(CV)|= m.$  Then $m-n=1. $ Note that both $CV$  and $N(CV)$  are independent sets, the partite sets of $T'.$  Also the number of edges of $T'$  is $m+n-1=2m.$ Now a vertex of ${\mathcal P}(T')$ cannot be an end vertex as otherwise its neighbour is a $cfv$, contradicting the independence of ${\mathcal P}(T')$ in a MC. Thus each vertex of ${\mathcal P}(T')$  has degree 2. Therefore $T'$ is the subdivision of a tree $T$ on $n$ vertices and $m$  edges.

Conversely, 
 let   $T$ be a tree on  $n$ vertices and $m$ edges and let   $S$ be its subdivision. 
  Then by  Corollary \ref{CorSub},  $S$  has nullity 1.
%

   Since $S$  is a singular tree, then by Proposition \ref{PropTreeCVindep}, CV is an independent set. Hence $S$  has a core--labelling. 
Let the partite sets $V_1$  and $V_2$ 
in $S$ be the original vertices of $T$  and the inserted vertices, respectively. Note $|V_1|=|V_2|+1.$ 
By Theorem \ref{TheoBipNull1} (iii), $CV\subseteq V_1.$  Since $S$ is bipartite, $N(CV)\subseteq V_2.$

Recall that $V_1$ in $S$ was the set of original vertices of $T.$ 
Let $w\in V_1.$ 
The subgraph $S-w$ of $S,$ obtained from $S$ after  removing $w$ has a perfect matching with edges $\{u_i,w_j\}, \ u_i \in V_2 ,\ w_j\in V_1.$  Hence $S-w$ has nullity 0. This means that the nullity of $S$ decreases on deleting $w.$ Hence $w\in CV,$  that is $V_1\subseteq CV.$  The subset $V_1$ is therefore $CV$ in $S.$

We now consider $V_2,$ which is a partition of $N(CV)$  and $CFV_R.$ 
Since the $S$ is connected, then $V_2=N(CV). $
 It follows that $S$ is a bipartite slim graph of nullity 1, with $V_1=CV$. By Theorem \ref{TheoBipMCharzn}, $S$ is a bipartite $MC$.

%
%
%
%
%
%
%
%
%
%
%
%
%
%
%
%
\end{proof}


Note that the subgraph of $S,$ obtained after  removing $u\in V_2,$  is a subdivision of a forest of two trees and has nullity 2. Repeating the process until all the  vertices in $V_2$ are removed, the nullity increases to $V_1.$  Hence the nullity increased by 1 with each vertex  deletion. It follows that each vertex in $V_2$ is an upper $cfv,$  a condition required for a MC.
It is also worth noting that the incidence matrix $\bf B$ appearing as a submatrix of the adjacency matrix of a subdivision  of a tree in  (\ref{MatrixSubDiv})  is precisely ${\bf Q}$  in (\ref{EqCoreL}). 


 We now show that the size of the periphery of a MC tree is related to the matching number $t.$

For a general singular tree  $T$, 
a maximal matching consists of 
the  pendant edges removed, until $\overline{K_{\eta(T)}}$ is obtained,  
starting from any end--vertex in $T.$ 
One can start from  a slim forest and  extend to a general tree $T'$ of the same nullity with the CV preserved by adding pairs of adjacent vertices in  $CFV_R(T').$ This can be done either by adding a pendant edge and 
  joining it to a $cfv $ or by inserting two vertices of degree 2 in an edge with $cfv$s 
    as end vertices. 

\begin{proposition} \label{ProptMC}
	If $T'$ is a minimal configuration  tree, then $t = |N(CV)|$.
\end{proposition}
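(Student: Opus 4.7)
My plan is to reduce the identity $t = |N(CV)|$ to a one-line arithmetic calculation using results already in hand. By Theorem \ref{TheoSubdiv}, $T'$ is the subdivision of some tree $T$ on, say, $n$ vertices and $n-1$ edges, so $|V(T')| = 2n - 1$. From the proof of Theorem \ref{TheoSubdiv} (or equivalently from Theorem \ref{TheoBipMCharzn}), the partite set $V_1$ of original vertices is exactly $CV$ and the partite set $V_2$ of inserted degree-$2$ vertices is exactly $N(CV)$, with $CFV_R = \emptyset$; in particular $|CV| = n$ and $|N(CV)| = n-1$. Since $T'$ is a MC, $\eta(T') = 1$, and the matching-number formula $\eta(T') = |V(T')| - 2t$ (noted immediately after Corollary \ref{CorTreeRem2v}) then gives $1 = (2n-1) - 2t$, hence $t = n - 1 = |N(CV)|$.

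If one preferred a combinatorial verification that avoids invoking the nullity/matching formula, Hall's theorem can be applied directly to the bipartition $(CV, N(CV))$: every edge of $T'$ has exactly one endpoint in $N(CV)$, so any matching has size at most $|N(CV)|$; conversely, each subset $S \subseteq N(CV)$ corresponds to a set of $|S|$ edges of the underlying tree $T$ whose endpoint set, being a subforest, has strictly more than $|S|$ vertices, so Hall's condition is satisfied and $N(CV)$ is saturated by some matching. Either route makes the claim immediate. The only genuine work lies in identifying the correct combination of earlier structural results, which Theorem \ref{TheoSubdiv} supplies, so I anticipate no serious obstacle.
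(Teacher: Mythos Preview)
Your primary argument is correct and is essentially identical to the paper's own proof: both invoke Theorem~\ref{TheoSubdiv} to write $T'$ as the subdivision of a tree $T$, identify $N(CV)$ with the $m=n-1$ inserted vertices, and then solve $1=\eta(T')=|V(T')|-2t$ for $t$. Your optional Hall's-theorem alternative is also valid (a set of $|S|$ edges in a tree spans a forest with at least $|S|+1$ vertices, so Hall's condition holds), and it gives a direct combinatorial certificate for the matching that the paper does not include.
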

\begin{proof}
	For a MC tree, $\eta(T')=1=n(T')-2t.$ Also, by Theorem \ref{TheoSubdiv}, $T'$ is the subdivision of a tree $T$ on $n$ vertices and $m$ edges. So $n(T')=n+m $  and $2t= n+m-1= 2m.$  Since the vertices in $N(CV(T'))\ (= {\mathcal P}(T'))$  are the vertices inserted 
	in the edges of $T$ to form the subdivision, $t=m=|N(CV)|$.
\end{proof}


%


 The next  result is on the rank of ${\bf Q}$ in the adjacency matrix of a 
core--labelled tree.

\begin{theorem} \label{TheoTreeQ}
	If $T$ is a core--labelled tree, then the columns of $\mathbf{Q}$ are linearly independent.
\end{theorem}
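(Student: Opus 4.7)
The plan is to argue by induction on the number of vertices, with the statement generalised to core-labelled \emph{forests}, since deleting a pendant edge from a tree may disconnect it (and $\mathbf{Q}$ of a disjoint union is block-diagonal, so the generalisation is formally costless). The base case is vacuous: for a non-singular forest $CV=\emptyset$ and $\mathbf{Q}$ has no columns. For the inductive step, I would take a core-labelled singular tree $T$ on $n\ge 2$ vertices and apply Proposition~\ref{PropT2endCV} (to any singular component) to locate an end vertex $w \in CV$. Writing $u$ for its unique neighbour, the $w$-th coordinate of $\mathbf{A}\mathbf{x}=\mathbf{0}$ forces $x_u=0$ for every kernel vector $\mathbf{x}$, so $u\in CFV$, and since $u$ is adjacent to the core vertex $w$ we get $u \in N(CV)$.

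Next I would set $T' = T - u - w$ and invoke Theorem~\ref{TheoRemPend}, which gives $\eta(T')=\eta(T)$ and preserves the core/core-forbidden type of every remaining vertex. Because $w$ is a leaf, its only edge is to $u$, so deleting $w$ cannot strip any $N(CV)(T)$-vertex other than $u$ of its access to a core vertex. Consequently the three-way partition transfers cleanly:
\[
CV(T') = CV(T)\setminus\{w\}, \quad N(CV)(T') = N(CV)(T)\setminus\{u\}, \quad CFV_R(T') = CFV_R(T),
\]
so $T'$ is again a core-labelled forest, and $\mathbf{Q}(T')$ is precisely $\mathbf{Q}(T)$ with the $w$-row and the $u$-column deleted.

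The linear-independence argument then follows easily. Given any $\mathbf{y}$ indexed by $N(CV)(T)$ with $\mathbf{Q}(T)\mathbf{y}=\mathbf{0}$, the $w$-row of $\mathbf{Q}(T)$ consists of a single $1$ in column $u$ (since $u$ is $w$'s sole neighbour, and lies in $N(CV)$). This forces $y_u=0$. Substituting back, the remaining rows — those indexed by $CV(T)\setminus\{w\}$ — reduce to $\mathbf{Q}(T')\mathbf{y}'=\mathbf{0}$, where $\mathbf{y}'$ is the restriction of $\mathbf{y}$ to $N(CV)(T)\setminus\{u\}$. The inductive hypothesis then gives $\mathbf{y}'=\mathbf{0}$, and combining with $y_u=0$ yields $\mathbf{y}=\mathbf{0}$.

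The main obstacle I expect is the bookkeeping in the transition from $T$ to $T'$: a priori, removing a core vertex might cause some elements of $N(CV)(T)$ to drop into $CFV_R(T')$, which would spoil the shape of $\mathbf{Q}(T')$ and break the induction. This pitfall is averted precisely by choosing $w$ to be a leaf — so that its deletion affects only $u$'s status — and the existence of a suitable leaf is exactly the content of Proposition~\ref{PropT2endCV}. A secondary technical subtlety is that $T-u-w$ need not remain connected, which is the reason the inductive hypothesis must be phrased for forests rather than trees from the outset.
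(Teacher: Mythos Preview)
Your argument is correct, but it is a genuinely different proof from the paper's. The paper does not induct at all: it combines the tree formula $\eta(T)=n-2t$ with the structural fact (Theorem~\ref{TheoPM}) that $CFV_R$ carries a perfect matching to deduce $2t=2|N(CV)|+|CFV_R|$, whence $\eta(T)=|CV|-|N(CV)|$; then Theorem~\ref{TheoQindep} finishes. Your route bypasses the matching machinery entirely, working instead by pendant-edge induction via Theorem~\ref{TheoRemPend} and Proposition~\ref{PropT2endCV}, and arguing directly that $\mathbf{Q}(T)$ has a row with a unique nonzero entry. What the paper's approach buys is brevity and a clean numeric identity $\eta(T)=|CV|-|N(CV)|$ as a byproduct; what your approach buys is self-containment (no need for $\eta(T)=n-2t$ or Theorem~\ref{TheoPM}) and a transparent combinatorial reason why the columns are independent. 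One small wording hiccup: your inductive step speaks of a ``singular tree $T$'' and then of ``any singular component'', which is inconsistent; you mean a singular \emph{forest}, from which you select a singular tree component to which Proposition~\ref{PropT2endCV} applies. With that phrasing fixed, the bookkeeping you flag (that deleting a leaf core vertex cannot push any $N(CV)$-vertex other than $u$ into $CFV_R$) is exactly right and the induction goes through.
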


\begin{proof} For a  core--labelled graph $G$,
by Theorems  \ref{TheoNullCVrnkQ},    $\eta(G) = |CV| - \textup{rank}(\mathbf{Q})$. 
For a tree, $\eta(T)=n-2t.$  By Theorem \ref{TheoPM}, 
for a core--labelled tree,   $2t=2|N(CV)|+|CFV_R|. $ 
Since $n=  |CV|+ |N(CV)|+|CFV_R|$, by eliminating t,    $\eta(T) = |CV| - |N(CV)|$. 
By Theorem  \ref{TheoQindep}, $\mathbf{Q}$  has full rank and the  $|N(CV)|$  columns of $\mathbf{Q}$
are linearly independent.
  
\end{proof}

\section{Nullspace Preserving Edge Additions}  \label{SecAddE}
In this last section, we explore which edges could be added (or removed) from a graph to preserve the nullity or the core vertex set.

By Cauchy's Interlacing Theorem for real symmetric matrices, the nullity changes by at most 1,  on adding or deleting a vertex. By definition, if the vertex is a $cfv_{mid},$  the nullity is preserved. We now explore which edge  additions allow  the nullity and the core vertex set  to be  both preserved  in  a graph with independent core--vertices. We use  again the vertex partition into $CV$, $N(CV)$  and $CFV_R$  induced by a core--labelling.   We consider adding an edge between two  vertices within a part or between two distinct parts of  the partition.

\begin{theorem} \label{TheoEcvNcv}
	Let $G$ be a core-labelled graph. 
	 Let $u \in CV$ and $w \in N(CV)$, such that $u \nsim w$ in $G$. Let $G^\prime \coloneqq G + e$ be obtained from $G$ by adding an  edge $e$ such that the core-labelling is preserved, where $e \coloneqq \{u,w\}$. Then $\eta(G^\prime) \geq \eta(G).$ Moreover, there is a vector $\mathbf{x}_{ _{CV}}$ which is in $\text{\rm Ker}\left(\mathbf{Q}^\intercal\right)$ but not in $\text{\rm Ker}\left(\left(\mathbf{Q^\prime}\right)^\intercal\right)$ and a vector $\mathbf{y}_{ _{CV}}$ which is in $\text{\rm Ker}\left(\left(\mathbf{Q^\prime}\right)^\intercal\right)$
	 but not 
	in $\text{\rm Ker}\left(\mathbf{Q}^\intercal\right)$ .
\end{theorem}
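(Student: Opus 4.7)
My plan is to translate the nullity comparison from the full adjacency matrices to the submatrices $\mathbf{Q}$ and $\mathbf{Q}'$ coming from the block decomposition (\ref{EqCoreL}) of $\mathbf{A}(G)$ and $\mathbf{A}(G')$, using Lemma \ref{LemNullQ}. Because $u\in CV$, $w\in N(CV)$ and the core-labelling is preserved in $G'$, the zero blocks in $\mathbf{A}(G')$ match those of $\mathbf{A}(G)$ and the blocks $\mathbf{N}$, $\mathbf{R}$, $\mathbf{M}$ are unchanged; the only change is $\mathbf{Q}' = \mathbf{Q} + E_{uw}$, where $E_{uw}$ is the matrix carrying a single $1$ in position $(u,w)$. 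In particular $\mathbf{Q}^\intercal$ and $(\mathbf{Q}')^\intercal$ differ by the rank-one update $\mathbf{e}_w\mathbf{e}_u^\intercal$, so the question becomes a rank-one perturbation analysis.

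From this setup I would describe the intersection of the two kernels explicitly. For any $\mathbf{x}_{CV}$ indexed by $CV$, one has $(\mathbf{Q}')^\intercal\mathbf{x}_{CV} = \mathbf{Q}^\intercal\mathbf{x}_{CV} + x_u\,\mathbf{e}_w$, where $x_u$ denotes the $u$-entry of $\mathbf{x}_{CV}$. Hence $\mathbf{x}_{CV}$ lies in $\text{\rm Ker}(\mathbf{Q}^\intercal)\cap\text{\rm Ker}((\mathbf{Q}')^\intercal)$ if and only if it lies in $\text{\rm Ker}(\mathbf{Q}^\intercal)$ (equivalently in $\text{\rm Ker}((\mathbf{Q}')^\intercal)$) and satisfies $x_u=0$. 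Since $u\in CV(G)$, Definition \ref{core} combined with Lemma \ref{LemNullQ} furnishes a vector in $\text{\rm Ker}(\mathbf{Q}^\intercal)$ with non-zero $u$-entry; this is the required $\mathbf{x}_{CV}$, and the intersection description immediately forces it out of $\text{\rm Ker}((\mathbf{Q}')^\intercal)$. Symmetrically, since the core-labelling is preserved, $u\in CV(G')$ supplies the required $\mathbf{y}_{CV}\in\text{\rm Ker}((\mathbf{Q}')^\intercal)$ with non-zero $u$-entry, which cannot lie in $\text{\rm Ker}(\mathbf{Q}^\intercal)$.

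For the nullity inequality, I would count the dimension of the intersection from both sides. Viewed inside $\text{\rm Ker}(\mathbf{Q}^\intercal)$, the intersection is the hyperplane $\{x_u=0\}$, which is proper because $u\in CV(G)$, so it has dimension $\eta(G)-1$. Viewed inside $\text{\rm Ker}((\mathbf{Q}')^\intercal)$, it is again the hyperplane $\{x_u=0\}$, proper because $u\in CV(G')$, so it has dimension $\eta(G')-1$. Equating the two counts yields $\eta(G')=\eta(G)$, which in particular delivers the claimed $\eta(G')\geq\eta(G)$.

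The main subtlety I anticipate lies in pinning down exactly what ``the core-labelling is preserved'' entails and using it cleanly. The argument needs both that $CV$ indexes the same vertex set in $G$ and in $G'$, so that Lemma \ref{LemNullQ} applies with the same $\mathbf{Q}'$ block, and that $u$ is still a core vertex in $G'$, so that the second hyperplane count is genuinely $\eta(G')-1$ and not $\eta(G')$. Without the latter, the linear form $x_u$ could vanish identically on $\text{\rm Ker}((\mathbf{Q}')^\intercal)$ and one would only recover the weaker bound $\eta(G')\geq\eta(G)-1$; ruling this out is precisely the role of the preservation hypothesis in the statement.
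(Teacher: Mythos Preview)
Your proposal is correct and follows essentially the same approach as the paper: both reduce to the rank-one perturbation $(\mathbf{Q}')^\intercal=\mathbf{Q}^\intercal+\mathbf{e}_w\mathbf{e}_u^\intercal$ via Lemma~\ref{LemNullQ}, and both exploit that $u$ is a core vertex in $G$ and in $G'$ to produce the two required vectors and to control the nullities. The only cosmetic difference is that the paper constructs an explicit basis of $\ker(\mathbf{Q}^\intercal)$ in which only one vector has non-zero $u$-entry and then argues $\eta(G')\geq\eta(G)$ and $\eta(G')\leq\eta(G)$ separately, whereas you compute the common intersection as the hyperplane $\{x_u=0\}$ inside each kernel and read off $\eta(G')=\eta(G)$ directly; these are equivalent formulations of the same rank-one argument.
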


\begin{proof}  For a core labelling of a graph $G,$  with  vertices $u \in CV$ and $w \in N(CV)$ labelled 1 and $|CV|+1,$  respectively,  such that  $u \nsim w$ in $G, $  we write  $u = 1$ and $w = |CV| + 1.$
 Let the adjacency matrix  $\mathbf{A}$ be  as in (\ref{EqCoreL}). 
On adding edge  
   $\{u,w\},$
 the adjacency matrix $\mathbf{A}^\prime$  of $G^\prime$ 
   satisfies  $\mathbf{A}^\prime = \mathbf{A} + \mathbf{E}$ where  \vspace*{-.52mm}
	$$\mathbf{E} = \left(\begin{array}{c|c|c}
	0 & 1 \ 0 \ . . . & 0 \\  \vspace*{-.35cm}
	&&\\ 
	{\bf 0} & {\bf 0}  \  & {\bf 0} \\ \hline
	1 \ 0 \ . . . &\ 0 \ . . . & 0 \\   \vspace*{-.35cm}
	&&\\  
	{\bf 0} \ & {\bf 0} &{\bf 0} \\ 
			\hline
	{\bf 0} &{\bf 0} & {\bf 0}\\
	\end{array}\right).$$   

Since $u$ is a cv, there exists  $\mathbf{x}^{(1)}$ in the nullspace of  $ \mathbf{A}$   with the first entry $\alpha $   non-zero.
If $\eta(G)>1,$	let $\mathbf{x}^{(1)}, \mathbf{x}^{(2)}, ..., \mathbf{x}^{\left(\eta(G)\right)}$ 
 be a basis for the nullspace of $G$, such that only $\mathbf{x}^{(1)}$ has the first entry non-zero.
	Denoting column $i$ of the identity matrix  by   $\mathbf{e}_i$  
	  and writing $\mathbf{x}^{(1)}= \left(\begin{array}{c}
	  	 \mathbf{x}_{ _{CV}}\\ \hline {\bf 0} \\ \hline {\bf 0} \end{array}\right),$ conformal \mbox{with (\ref{EqCoreL}),}  row  $w$  of  $\mathbf{A}^\prime \mathbf{x}^{(1)}$ is $\mathbf{e}_w^{\intercal}(\mathbf{Q}^\prime)^{\intercal} \mathbf{x}_{ _{CV}} =   
   \mathbf{e}_w^{\intercal}\mathbf{Q}^{\intercal} \mathbf{x}_{ _{CV}} +\alpha =\alpha \neq { 0}.$ 
   
    Hence  $\mathbf{Q}^\prime \mathbf{x}^{(1)}_{ _{CV}}\neq  {\bf 0}.$ 
   By the proof of Lemma \ref{LemNullQ},  $\mathbf{A}^\prime \mathbf{x}^{(1)}\neq {\bf 0}.$ 
   Thus $\mathbf{x}^{(1)}$  is a vector in the nullspace of $\mathbf{A}$ but not in the nullspace of 
   $\mathbf{A}^\prime.$ Moreover, $\left(\mathbf{Q^\prime}\right)^\intercal \mathbf{x}^{(i)}_{ _{CV}} = \mathbf{0}$, for $2 \leq i \leq \eta(G)$. 
   Thus 
    the $\eta(G) - 1$ vectors $\mathbf{x}^{(2)}, ..., \mathbf{x}^{\left(\eta(G)\right)}$ lie in the nullspace of $G^\prime$.
    
     Since $CV $  is preserved of adding edge $\{u,w\}$,  $u$ is also a core vertex in $G^\prime$. 
     Hence there is another vector $\mathbf{y}^{(1)}$  in the nullspace in  $ \mathbf{A}^\prime$ with the first entry non-zero.    Therefore $\eta(G^\prime) \geq \eta(G)$.\\
     
   A similar  argument as above yields  $\eta(G^\prime) \leq \eta(G)$, 
   so that the graphs $G$ and $G^\prime$  have the same nullity. 
    Moreover, $\mathbf{x}^{(1)} $ is a vector in the nullspace of $\mathbf{A}$ but not in the nullspace of $\mathbf{A}^\prime$  whereas $\mathbf{y}^{(1)}$  is a vector in the nullspace of $\mathbf{A}^\prime$ but not in the nullspace of $\mathbf{A}.$   \end{proof}

As a consequence of Theorem \ref{TheoEcvNcv}, addition of an  edge from a vertex in $CV$ to a vertex in  $ N(CV)$ which
 preserves  the  core-labelling does not change the nullity but may change  the nullspace. The  addition of edges between two vertices in $CV$ vertices is not possible as  the core-labelling will  not remain well defined. Furthermore, the addition of an edge between a $CV$ vertex and a $CFV_R$ vertex is not permissible either as the core-labelling changes.

Therefore,  to preserve the core--labelling,   only the following edge additions are left to be considered: \vspace{-2mm}
 \begin{enumerate}[(i)]
	\item $N(CV)$ -- $N(CV)$ edges,
	\item $N(CV)$ -- $CFV_R$ edges, 
	\item $CFV_R$ -- $CFV_R$ edges.
\end{enumerate} 
\vspace*{-.5cm}
Before presenting results on the perturbations that satisfy constraints relating to the nullspace of ${\bf A},$ we give examples to show  the possible effects on the vertex types and on the nullspace  on adding an edge to graphs with independent core vertices.

\begin{figure}[!h]
\begin{center}
\includegraphics[trim= 0cm 0cm 0cm 5cm,width=6cm]{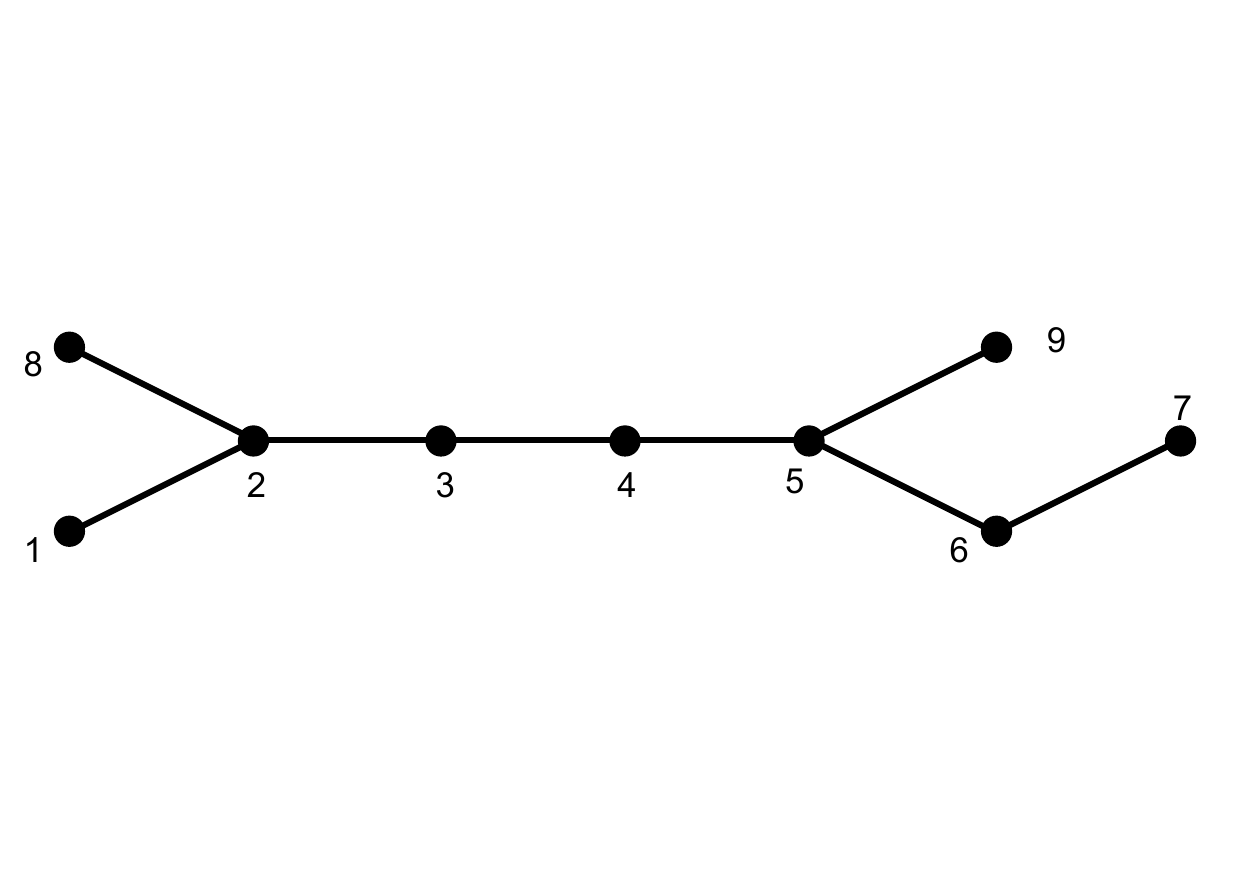} \hspace*{.1cm}
\includegraphics[trim= 0cm 0cm 0cm 1cm, width=5cm]{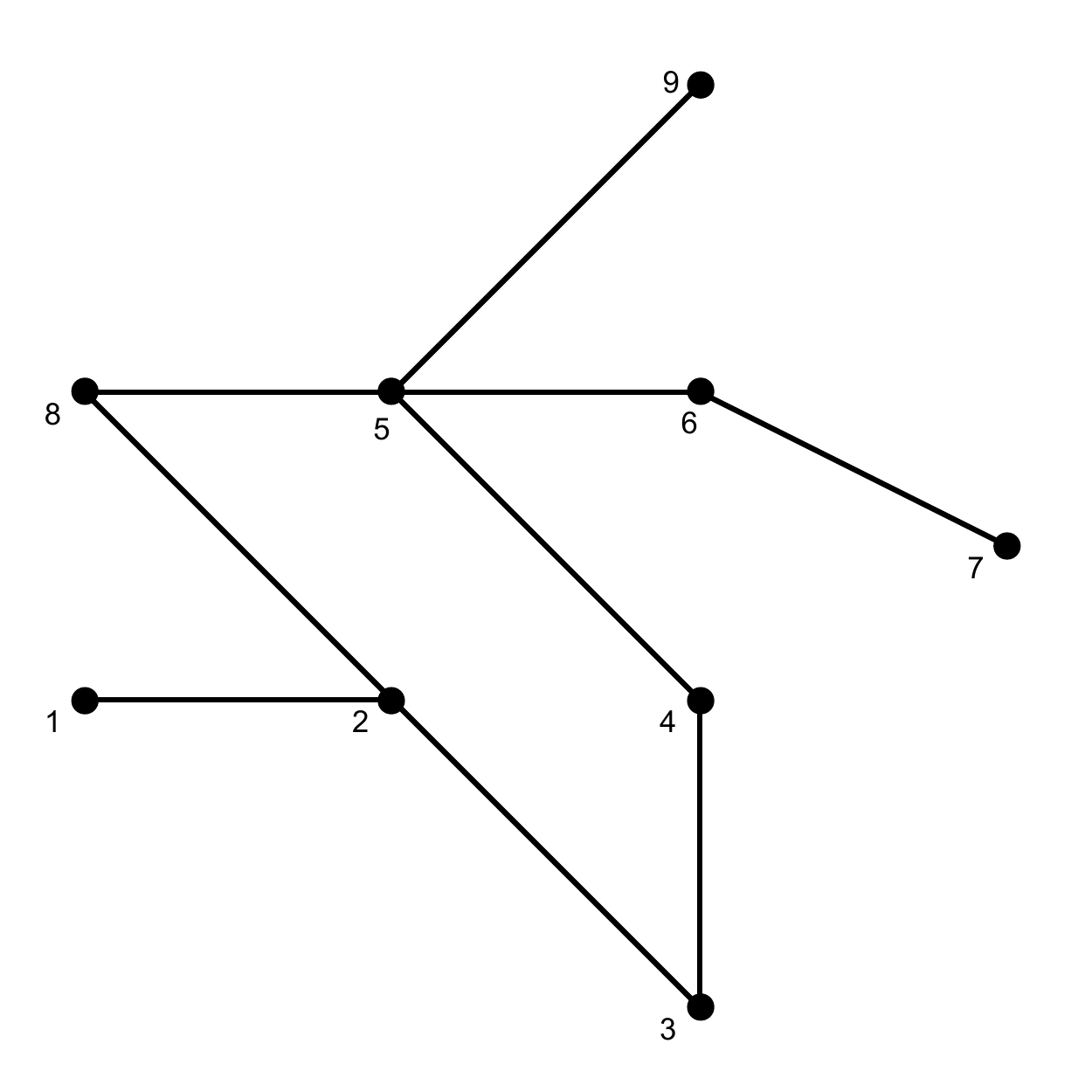}  \vspace*{-.3cm}
\caption{Adding edge $e=\{5,8\}$ to the tree $T$ of nullity 1 preserving the nullity but altering the   core--vertex set.}  \label{FigEdge85}
\end{center}
\end{figure} \vspace{-4mm}
Figure \ref{FigEdge85} shows tree $T$  and the unicyclic graph $T+e_{\{5,8\}} $  with core vertices $\{1,8\}$  replaced by $\{1,8,9\}.$  Figure \ref{FigEdge314}  shows the half cores $H$ of nullity 2 and $H+e_{3,14}$ with the same core vertices but with different nullspace vectors of their adjacency matrix.  The nullspace of ${\bf A}(H)$  is generated by   \vspace{-4mm}
$$\{\{0, -1, 0, 0, 0, 1, 0, -1, 0, 1, 0, -1, 0, 1\}, \{0, -1, 0, -1, 0, 1, 
  2, 0, 0, 0, 0, 0, 0, 0\}\}$$  
  
   \vspace{-4mm}
 and on adding the edge $\{3,14\}$  the nullspace  generator  of ${\bf A}(H+e_{\{3,14\}})$ becomes  
 \vspace{-3mm}
$$\{\{0,-1,0,1,0,1,0,-2,0,2,0,-2,0,2\},\{0,-1,0,-1,0,1,2,0,0,0,
0,0,0,0\}\}.$$
\vspace*{-.32cm}
  
\begin{figure}[!h]
\begin{center}
\includegraphics[trim= 0cm 2cm 0cm 3cm, width=7cm]{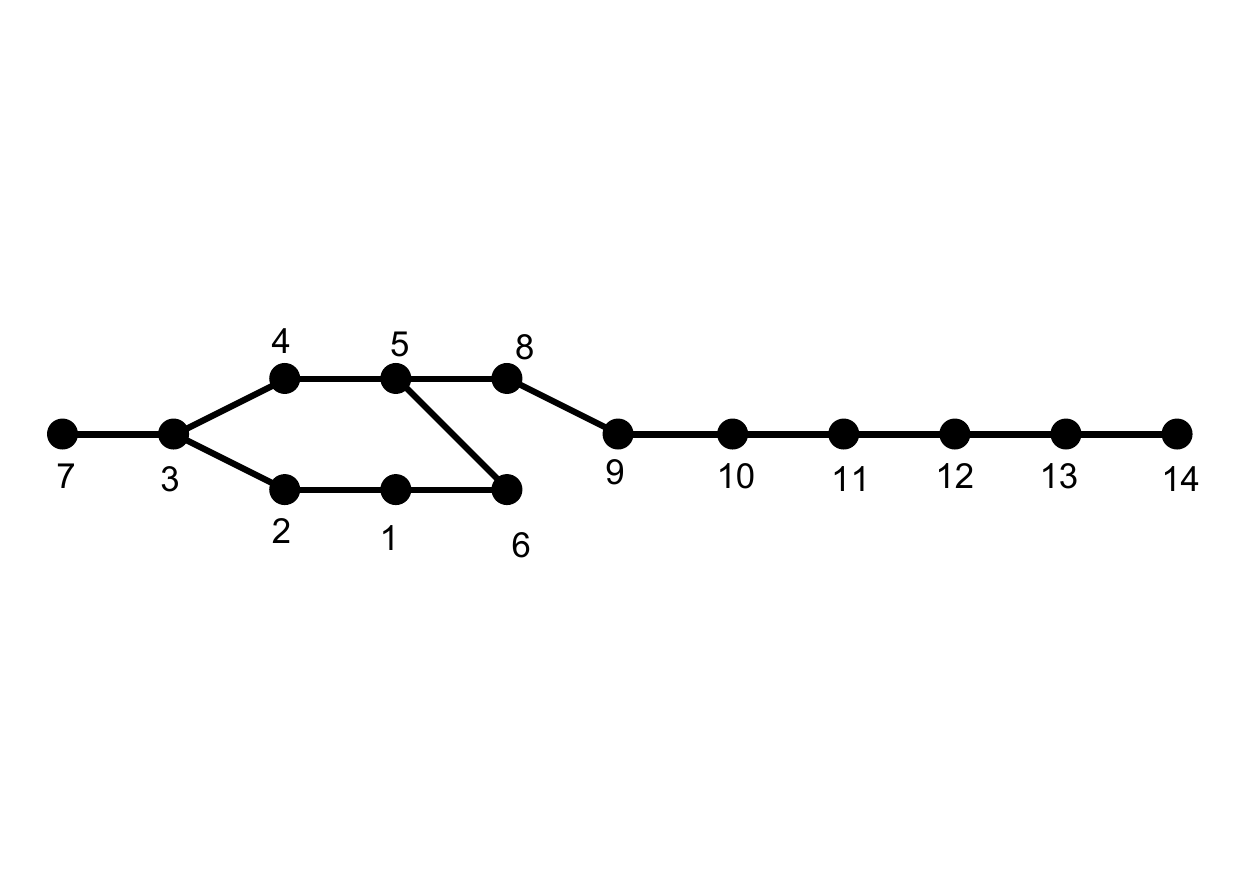} 
\includegraphics[trim= 0cm 0cm 0cm 3cm, width=6cm]{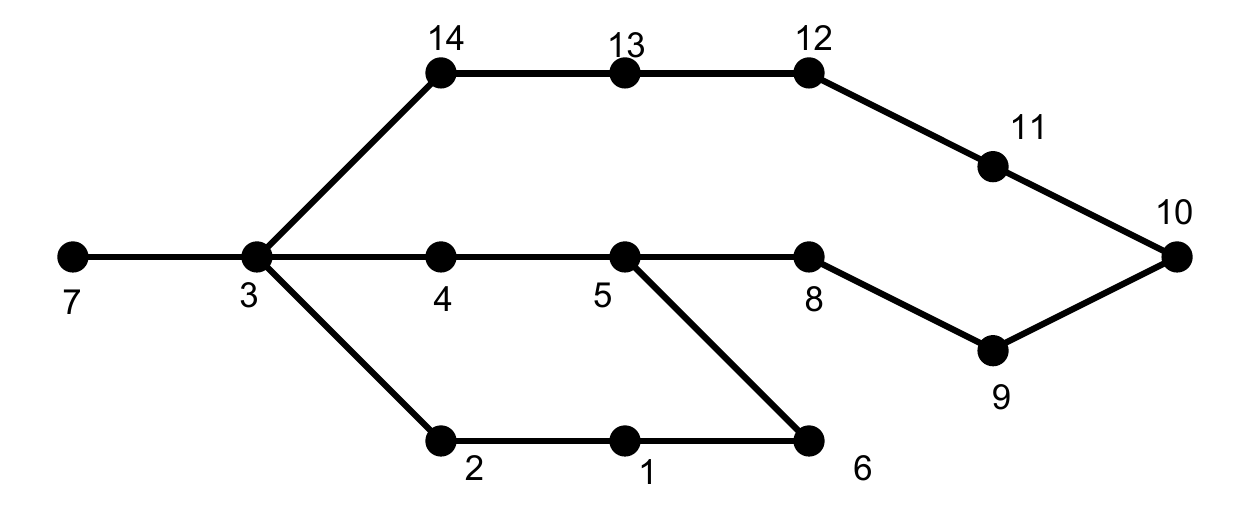}
\caption{Adding edge $e=\{3,14\}$ to the graph  $H$ of nullity 2 preserving the nullity  and the   core--vertex set  but altering the   nullspace.}  \label{FigEdge314}
\end{center}
\end{figure}
\pagebreak

We give another example where the nullity changes from 0 to 2 on adding an edge. The perturbation to the tree $T'$ shown in Figure \ref{FigEdge12} is the addition of edge $\{1,2\}$.  The nullspace of ${\bf A}(T')$  is generated by 
$\{0\}$
and on adding the edge $\{1,2\}$  the nullspace  generator  of ${\bf A}(T'+e_{1,2})$ becomes  
$\{\{0, 1, 0, -1, 0, 1\}, \{-1, 0, 1, 0, 0, 0\}\}.$

\begin{figure}[!h]
\begin{center}
\includegraphics[trim= 0cm 2cm 0cm 2cm, width=7cm]{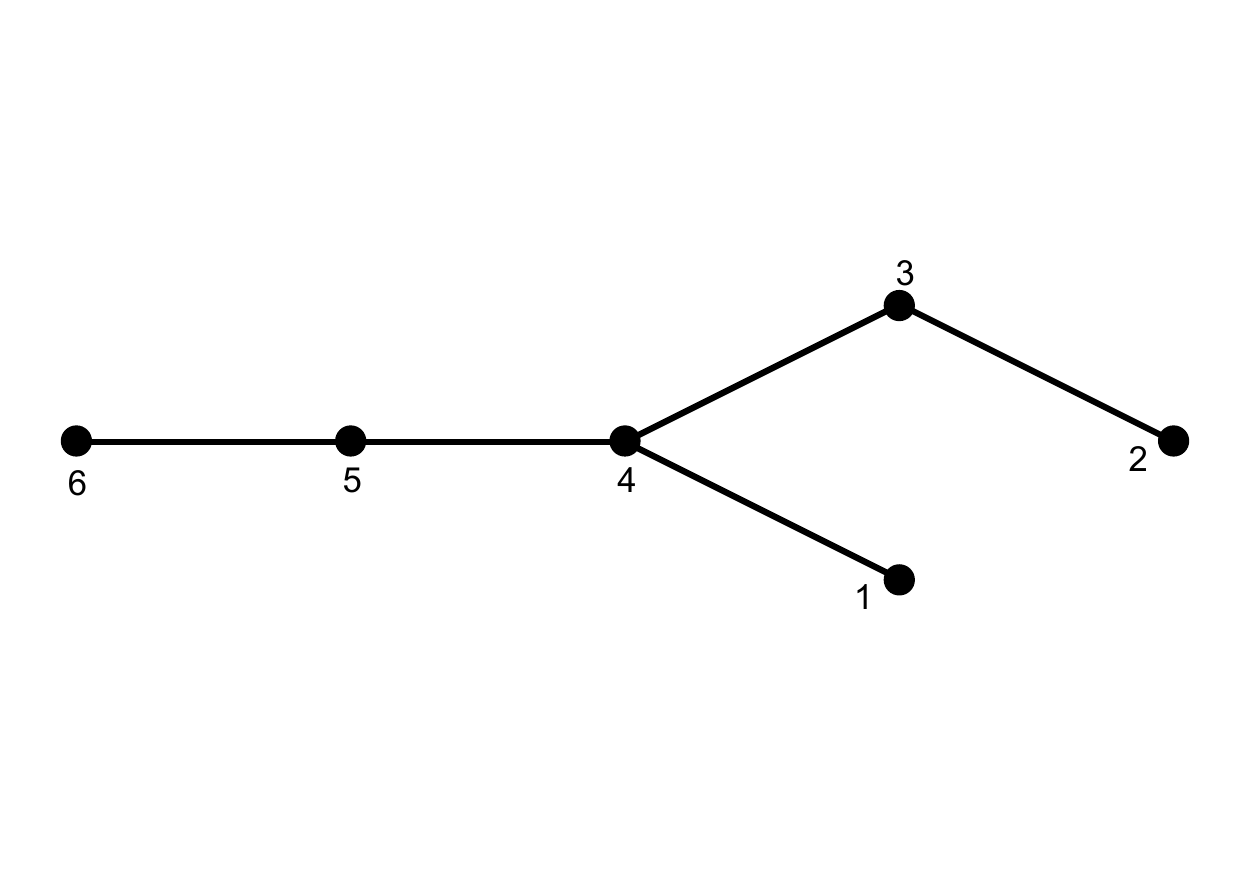} 
\includegraphics[trim= 0cm 2cm 0cm 2cm, width=5cm]{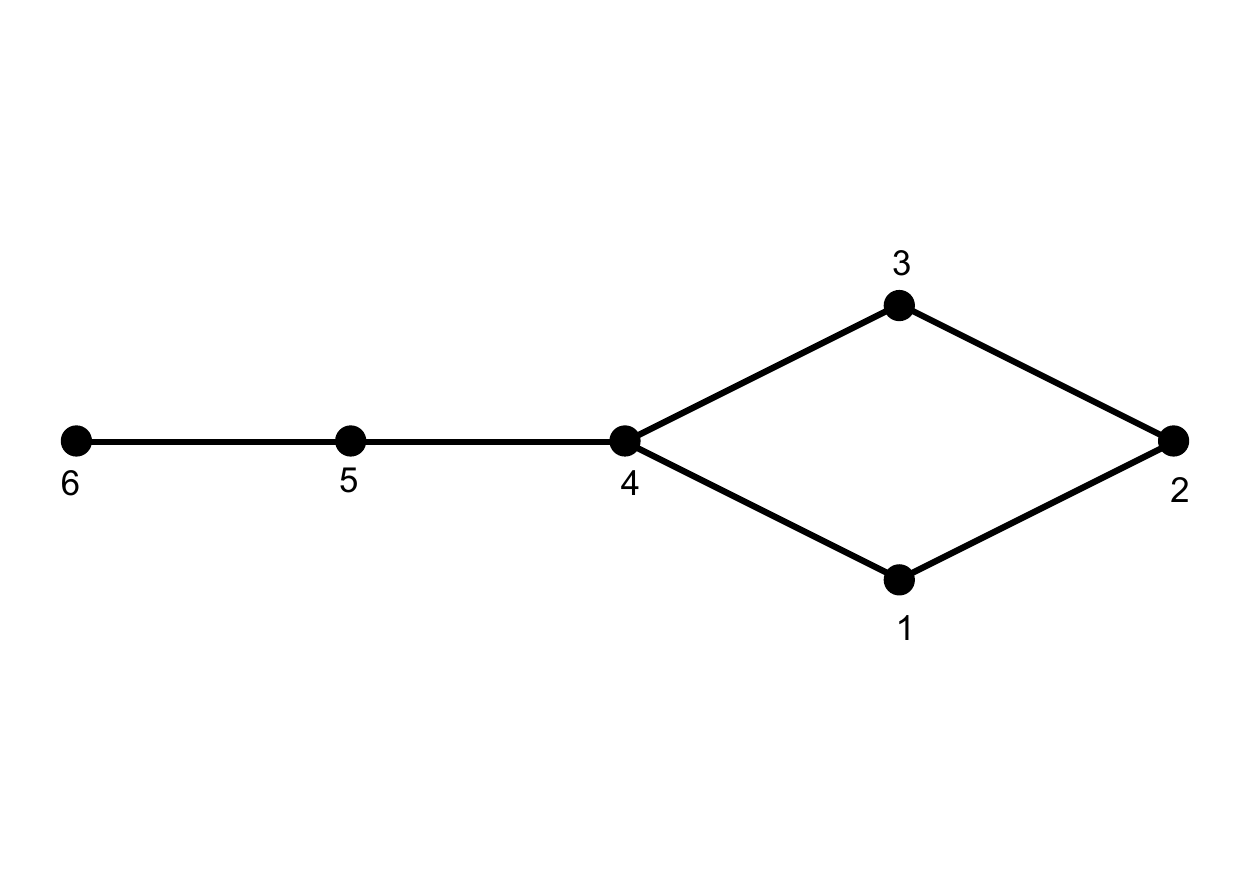}
\caption{Adding edge $e=\{1,2\}$ to the non--singular tree  $T'$ increases the nullity to 2   and creates a four    core--vertex set.}  \label{FigEdge12}
\end{center}
\end{figure}

\begin{proposition} \label{PropG+e}
	Let $G$ be a  graph with independent core vertices. Let $u$ and $w$ be core-forbidden vertices, such that $u \nsim w$ in $G$. Let $G^\prime \coloneqq G + e$ be obtained from $G$ by adding the edge  $e = \{u, w\}$. If the  nullity is preserved, then $G + e$ has the same nullspace and core-labelling of $G$.
\end{proposition}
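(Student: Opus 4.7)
The plan is to exploit the key structural fact, used already in the proof of Lemma \ref{LemNullQ}, that for a core-labelled graph every nullspace vector is supported entirely on $CV$. Since the added edge $e=\{u,w\}$ joins two core-forbidden vertices, the perturbation matrix it induces is supported precisely on coordinates where the nullspace vectors are guaranteed to vanish. I expect the whole argument to reduce to this observation plus a dimension count.

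\textbf{Step 1: Record the perturbation.} With the core-labelling of $G$ fixed, write $\mathbf{A}' = \mathbf{A} + \mathbf{E}$, where $\mathbf{E}$ is the $\{0,1\}$-symmetric matrix whose only non-zero entries are $E_{uw}=E_{wu}=1$. Because $u,w\in CFV = N(CV)\cup CFV_R$, the non-zero rows and columns of $\mathbf{E}$ are all indexed by coordinates outside $CV$. This holds uniformly across the three sub-cases $N(CV)\text{--}N(CV)$, $N(CV)\text{--}CFV_R$, and $CFV_R\text{--}CFV_R$.

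\textbf{Step 2: Show $\ker(\mathbf{A})\subseteq \ker(\mathbf{A}')$.} As in the proof of Lemma \ref{LemNullQ}, any $\mathbf{x}\in\ker(\mathbf{A})$ has the core-labelled form $\mathbf{x}=(\mathbf{x}_{CV},\mathbf{0},\mathbf{0})^{\intercal}$. In particular $x_u=x_w=0$, so the only non-zero entries of $\mathbf{x}$ meet no non-zero entry of $\mathbf{E}$, giving $\mathbf{E}\mathbf{x}=\mathbf{0}$. Hence $\mathbf{A}'\mathbf{x}=\mathbf{A}\mathbf{x}+\mathbf{E}\mathbf{x}=\mathbf{0}$, so $\ker(\mathbf{A})\subseteq\ker(\mathbf{A}')$. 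Invoking the hypothesis $\eta(G')=\eta(G)$, i.e.\ $\dim\ker(\mathbf{A}')=\dim\ker(\mathbf{A})$, the inclusion is an equality: $\ker(\mathbf{A}')=\ker(\mathbf{A})$. This settles the nullspace-preservation claim.

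\textbf{Step 3: Deduce preservation of the core-labelling.} By Definition \ref{core}, the set $CV$ is the union of supports of the kernel vectors; since $\ker(\mathbf{A}')=\ker(\mathbf{A})$, the core vertex sets of $G'$ and $G$ coincide. In particular $CV$ remains independent in $G'$, since no edge between two vertices of $CV$ was added (the added edge lies in $CFV\times CFV$). Finally, $N(CV)$ in $G'$ is determined by the $CV$-to-other adjacencies, which were not altered by adding the edge $\{u,w\}$ within $CFV$; hence $N(CV)$ is unchanged, and consequently $CFV_R = V\setminus(CV\cup N(CV))$ is unchanged as well. The core-labelling of $G$ therefore serves as a core-labelling of $G'$.

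\textbf{Main obstacle.} There is no deep technical difficulty here: everything rests on the vanishing of kernel vectors outside $CV$ for a core-labelled graph. The only thing to be careful about is that the partition $(CV, N(CV), CFV_R)$ is invariant under an edge addition inside $CFV$, which needs the explicit observation that no new neighbour of a $CV$-vertex is created.
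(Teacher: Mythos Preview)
Your proof is correct and follows essentially the same approach as the paper: both arguments establish $\ker(\mathbf{A})\subseteq\ker(\mathbf{A}')$ from the fact that kernel vectors are supported on $CV$ while the perturbation lies entirely within $CFV$, then use the nullity hypothesis to upgrade the inclusion to equality and read off preservation of $CV$, $N(CV)$, and $CFV_R$. Your Step~3 is somewhat more explicit than the paper in verifying that $N(CV)$ is unchanged (since no edge incident to $CV$ was added), but the underlying idea is identical.
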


\begin{proof}
	Let $G$ be labelled so that  $\mathbf{A}$ is a  block matrix as in (\ref{EqCoreL}).  We show that a kernel vector $\mathbf{x}$ of $\mathbf{A}(G)$ is a kernel vector for $\mathbf{A}(G^\prime)$.
	
	Let $\mathbf{x}_{ _{CV}}$ be the restriction 	
	$\left(\alpha_1,\ldots, \alpha_{|CV|}\right)^\intercal $	
	of $\mathbf{x}$ to the core vertices of $G$. Then $\mathbf{x} = (\mathbf{x}_{ _{CV}},{\bf 0}).$ By definition of a kernel vector, $\mathbf{A}(G)\mathbf{x} = \mathbf{0}$. Therefore $\mathbf{Q}^\intercal  \mathbf{x}_{ _{CV}} = \mathbf{0}.$
	
	Now, on adding edge $e,$ the change in $\mathbf{A}(G)$ is contained in the blocks associated with  the core-forbidden vertices. Therefore, $\mathbf{A}(G^\prime)\mathbf{x} = 
	\mathbf{Q}^\intercal  \mathbf{x}_{CV} = \mathbf{0}.$
	
	Therefore the kernel vectors of $\mathbf{A}(G)$ are also kernel vectors of $\mathbf{A}\left(G^\prime\right)$. Thus $CV(G) \subseteq CV\left(G^\prime\right)$, that is $\eta(G) \leq \eta\left(G^\prime\right)$. If a \textit{cfv} in $G$ becomes a \textit{cv} in $G^\prime$, then the nullity increases. But the nullity is preserved. Hence $CV$ is preserved and so is the nullspace. In turn, it follows that $N(CV)$ and core-labelling of $G$ are unaltered by the perturbation. 
\end{proof}

The necessary condition established in Proposition \ref{PropG+e} can be relaxed to 
   a necessary and sufficient condition involving $CV$ only.

\begin{theorem} \label{TheoAddE}
	Let $G$ be a graph with independent core vertices. Let $u$ and $w$ be core-forbidden vertices, such that $u \nsim w$ in $G$. Let $G + e$ be an edge addition to $G$, where $e = \{u, w\}$. Then, nullity is preserved if and only if $CV(G) = CV(G + e)$.
\end{theorem}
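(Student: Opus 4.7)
The plan is to handle the biconditional by splitting into the two implications, and to exploit the fact that the hypotheses put the new edge entirely inside $CFV(G)$, so it does not touch the block $\mathbf{Q}$ in the core-labelled block form (\ref{EqCoreL}).

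The forward direction ($\eta(G+e)=\eta(G) \Rightarrow CV(G)=CV(G+e)$) is almost free: Proposition \ref{PropG+e} already states that if $u,w$ are core--forbidden and adding $\{u,w\}$ preserves nullity, then the nullspace and the core-labelling are both preserved. Reading off the $CV$ part of that conclusion gives $CV(G)=CV(G+e)$.

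For the converse, I would assume $CV(G)=CV(G+e)$ and first dispose of the trivial case $CV(G)=\emptyset$, in which both graphs are non-singular and $\eta(G)=\eta(G+e)=0$. When $CV(G)\neq\emptyset$, the key observation is structural: since $u,w\in CFV(G)$, the edge $e$ introduces no adjacency inside $CV$ nor between $CV$ and anything new. Therefore $CV(G+e)=CV(G)$ remains independent, so $G+e$ is a singular graph with independent core vertices and admits a core-labelling that reuses the labels of $G$. Moreover, the neighbourhood $N(CV)$ is determined entirely by edges incident to $CV$, and these are unchanged; hence $N(CV)$ and $CFV_R$ are the same sets in $G$ and $G+e$, and the $CV\times N(CV)$ block $\mathbf{Q}$ in (\ref{EqCoreL}) is literally the same matrix for both graphs.

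With $\mathbf{Q}$ identical and $|CV|$ unchanged, Theorem \ref{TheoNullCVrnkQ} applied to each graph immediately gives
\[
\eta(G)=|CV(G)|-\textup{rank}(\mathbf{Q})=|CV(G+e)|-\textup{rank}(\mathbf{Q})=\eta(G+e).
\]
The only point requiring care—the step I would treat as the mild obstacle—is justifying that $G+e$ is indeed a core-labelled graph with independent $CV$ so that Theorem \ref{TheoNullCVrnkQ} is available. This is what the hypothesis $CV(G+e)=CV(G)$ combined with $u,w\in CFV(G)$ buys us, and once it is spelled out the rest is bookkeeping with the block form.
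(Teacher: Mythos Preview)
Your proof is correct and follows essentially the same approach as the paper: the forward direction via Proposition~\ref{PropG+e}, and the converse by observing that $\mathbf{Q}$ is unchanged and applying Theorem~\ref{TheoNullCVrnkQ} to both graphs. Your explicit verification that $G+e$ still has independent core vertices (so that Theorem~\ref{TheoNullCVrnkQ} is applicable) is a point the paper's proof leaves implicit, so your treatment is if anything slightly more careful.
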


\begin{proof}
	Let the nullity be preserved. By Proposition \ref{PropG+e}, it follows that the core-labelling is preserved and hence $CV(G) = CV(G + e)$.\\
	
	Conversely, let $CV(G) = CV(G + e)$. Since the added edge is amongst the core-forbidden vertices in $G$, then $\mathbf{Q}(G) = \mathbf{Q}(G + e)$. By Theorem \ref{TheoNullCVrnkQ}, \vspace{-.35cm}
	\begin{align*}
		\eta(G) &= |CV(G)| - \text{rank}(\mathbf{Q}(G)) \\
		&= |CV(G + e)| - \text{rank}(\mathbf{Q}(G + e)) \\
		&= \eta(G + e)
	\end{align*} \vspace*{-.5cm}
	and hence nullity is preserved.
\end{proof}

The study of perturbations to networks finds many applications, in information technology and social networks in particular \cite{WANG20122084,RowlinsonPerturb1990,SciBriffDups2019}.
The results presented here are  of interest in combinatorial optimization and the study of perturbations to singular networks with the goal of  inserting   or removing  edges efficiently  while maintaining the same core vertex set. 
In machine learning,  to train a neural network,  switches linked to edge detectors in  the neural network  stochastically  disable specific  detectors in accordance with a preconfigured probability.  This technique is used to reduce over--fitting on the training data
\cite{GoogleDropOut2019}.
The behaviour of graph invariants, when applying changes to a graph   with constraints associated with the nullspace  of the adjacency matrix, leads to optimal architectures with a specified  nullity, retaining the independence of the core vertex set or  the core--labelling.

Many algorithms in predictive modelling depend on the processing of network graphs with underlying spanning trees in a network.
 The combinatorial properties of trees that we discussed  shed light on their inherent structure and help to devise efficient algorithms.
In the search for optimal network graphs with a constraint related to the nullspace of the adjacency matrix, one may start with a slim graph and add an admissible edge  joining non--adjacent vertices. The goal can be the preservation of one or more of the three properties associated with the nullspace of the adjacency matrix. These are the nullity, the core--vertex set  and the  entries of the normalized basis vectors of the nullspace of the adjacency matrix.

 Depending on the property to be preserved, edges can be added  selectively  to obtain optimal networks with a maximal number of edges  having the constant property. 
We have shown that adding edges to a graph may alter the  core vertex set,    the nullity  or the nullspace. Constraints may be imposed to keep one aspect unchanged.   Theorem  \ref{TheoEcvNcv} shows that adding edges between the mixed types $CV$ and $N(CV)$ of vertices, while the core--labelling is unchanged, preserves the nullity but upsets the nullspace.   By Theorem  \ref{TheoAddE}, adding edges between core--forbidden vertices is a  safe operation since  the core vertex set is left intact,  as long as the nullity  is unaltered.

\vspace{-.5cm}

%
%
%
%
%
%
%
%
%

\end{document}